\documentclass[a4paper,dvipsnames]{article}

\usepackage{amsthm,amssymb,amsmath,enumerate,graphicx,epsf,bm,caption,framed,relsize}
\usepackage{authblk}
\usepackage[greek,english]{babel}  
\usepackage[percent]{overpic}
\usepackage{epstopdf}
\usepackage{bbm}

\newcommand{\COLORON}{0}
\newcommand{\NOTESON}{0}
\newcommand{\Debug}{0}
\usepackage[usenames]{color} 
\usepackage{amsthm,amssymb,amsmath,bbm,enumerate,graphicx,epsf,stmaryrd}
\usepackage[bookmarks, colorlinks=false, breaklinks=true]{hyperref} %REMOVE FOR ARXIV

\hyphenation{com-pac-ti-fi-cation}

\newcommand{\comment}[1]{}
\newcommand{\COMMENT}[1]{}

\definecolor{darkgray}{rgb}{0.3,0.3,0.3}
\newcommand{\defi}[1]{{\color{darkgray}\emph{#1}}}

\newcommand{\acknowledgement}{\section*{Acknowledgement}}

%--------------------
% USEFUL STRUCTURES
%--------------------

%\newcommand{}{}

\comment{
	\begin{lemma}\label{}	
\end{lemma}
% *** ---- *** 
\begin{proof}

\end{proof}

\begin{theorem}\label{}
\end{theorem} 
% *** ---- *** 
\begin{proof} 	

\end{proof}

%\begin{conjecture}\label{}\end{conjecture} 
%\begin{corollary}\label{}\end{corollary} % *** ---- *** \begin{proof} 	\end{proof}
%\begin{example}\label{}\end{example} % *** ---- *** \begin{proof} 	\end{proof}
}

% THIS CREATES A PARAGRAPH WITH A (*) NEXT TO IT THAT YOU CAN REFER TO
%\begin{equation} \label{star} \begin{minipage}[c]{0.85\textwidth}
% bla
%\end{minipage}\ignorespacesafterend \tag{\ensuremath{*}} \end{equation}

%----------------------
% THEOREMS
%----------------------

\newtheorem{proposition}{Proposition}[section]

\newtheorem{theorem}[proposition]{Theorem}
\newtheorem{corollary}[proposition]{Corollary}

\newtheorem{lemma}[proposition]{Lemma}

\newtheorem{examp}[proposition]{Example}%[section]

% GERMAN

%\newcommand{\obda}{\textrm{w.l.o.g. }}

\newcommand{\FIG}{0}

\ifnum \NOTESON = 1 \newcommand{\note}[1]{ 

\hspace*{-30pt}
	{\color{blue}  NOTE: \color{Turquoise}{\small  \tt \begin{minipage}[c]{1.1\textwidth}  #1 \end{minipage} \ignorespacesafterend }} 
	
	}
\else \newcommand{\note}[1]{} \fi

\newcommand{\afsubm}[1]{ \ifnum \Debug = 1 {\mymargin{#1}}
\fi} %For notes on after-submission changes

\ifnum \Debug = 1 
\else  \fi

\ifnum \FIG = 1 \newcommand{\fig}[1]{Figure ``{#1}''}
\else \newcommand{\fig}[1]{Figure~\ref{#1}} \fi

\ifnum \FIG = 1 
\else  \fi

\ifnum \Debug = 1 \usepackage[notref,notcite]{showkeys}
\fi

\ifnum \COLORON = 0 \renewcommand{\color}[1]{}
\fi

%   \epsfxsize=0.8\hsize

%--------------------
% LETTERS
%--------------------

\newcommand{\N}{\ensuremath{\mathbb N}}
\newcommand{\R}{\ensuremath{\mathbb R}}
\newcommand{\C}{\ensuremath{\mathbb C}}
\newcommand{\Z}{\ensuremath{\mathbb Z}}

\newcommand{\oo}{\ensuremath{\omega}}

%--------------------
% SYMBOLS
%--------------------

%Inverting \vec into \cev:
\makeatletter
\DeclareRobustCommand{\cev}[1]{%
  \mathpalette\do@cev{#1}%
}
\newcommand{\do@cev}[2]{%
  \fix@cev{#1}{+}%
  \reflectbox{$\m@th#1\vec{\reflectbox{$\fix@cev{#1}{-}\m@th#1#2\fix@cev{#1}{+}$}}$}%
  \fix@cev{#1}{-}%
}
\newcommand{\fix@cev}[2]{%
  \ifx#1\displaystyle
    \mkern#23mu
  \else
    \ifx#1\textstyle
      \mkern#23mu
    \else
      \ifx#1\scriptstyle
        \mkern#22mu
      \else
        \mkern#22mu
      \fi
    \fi
  \fi
}

\makeatother
%Inverting \vec into \cev ends here

%--------------------
% MISC
%--------------------

\newcommand{\seq}[1]{\ensuremath{(#1_n)_{n\in\N}}} 

 %(sub-)sequence with a given index set
 %(sub-)sequence with subindex
 %ordinal-indexed sequence
 
 % family with given index set
 % family indexed by ordinal
%\newcommand{\fam}[1]{\fml{#1}} 
 % x-y flow
 % x-y flows

\newcommand{\g}{\ensuremath{G\ }}

%----------------------
% lTOP
%----------------------

%----------------------
% NETWORKS -- harmonic functions
%----------------------

 %cut respecting}
 %cut respecting}

%----------------------
% RW --- PROBABILITY
%----------------------

\newcommand{\Ex}{\mathbb E}
\renewcommand{\Pr}{\mathbb{P}}

%----------------------
% REFERENCE
%----------------------

\newcommand{\Lr}[1]{Lemma~\ref{#1}}

\newcommand{\Tr}[1]{Theorem~\ref{#1}}

\newcommand{\Sr}[1]{Section~\ref{#1}}

%----------------------
% VERBAL SHORTCUTS
%----------------------

%graphs

%quantifying etc.

\newcommand{\fe}{for every}

\newcommand{\st}{such that}

\newcommand{\wrt}{with respect to}

%misc

%Topology

%Probability

%----------------------
% ENVIRONMENTS
%----------------------

\newcommand{\labtequ}[2]{%\labtequc{#1}{#2}}
 \begin{equation} \label{#1} 	\begin{minipage}[c]{0.9\textwidth}  #2 \end{minipage} \ignorespacesafterend \end{equation} }

\newcommand{\mymargin}[1]{% <- dieses % verhindert ein ungewolltes Leerzeichen
 \ifnum \Debug = 1
  \marginpar{%
    \begin{minipage}{\marginparwidth}\small%
      \begin{flushleft}%
        {\color{blue}#1}%
      \end{flushleft}%
   \end{minipage}%
  }%
 \fi
}%

\newcommand{\mySection}[2]{}

%--------------------
% PEOPLE & fast citing
%--------------------

%--------------------
%STANDARD LEMMATA
%--------------------

%------------- Graphs ---------------

 %how to cite

 %how to cite

 %how to cite

 %how to cite

 %how to cite	

 %how to cite	

 %how to cite	

%------------- Topology ---------------

 %how to cite	

 %how to cite	

\usepackage{xcolor}

\usepackage{mathtools}

\RequirePackage{latexsym} \RequirePackage{amsthm}
\RequirePackage{amsmath}
\usepackage{hyperref}

\usepackage{enumerate}
\RequirePackage{amssymb}\RequirePackage{makeidx}
\usepackage{dsfont}
\usepackage{mathrsfs}
%\usepackage{fullpage}

%%%%%%%%%%%% New theorems %%%%%%%%%%%%%%%%%%%%%%%%

\newcommand{\myremark}[1]{\ifnum \Debug = 1 \tiny #1 \fi}

\newcommand{\sboxt}{\mathsmaller {\mathsmaller \boxtimes}}
\newcommand{\Ls}{\mathbb{L}^d_\sboxt}

\newcommand{\vx}{\ensuremath{\bm{x}}}

%==========
\newcommand{\pint}{interface}
\newcommand{\mpint}{multi-interface}

\usepackage{authblk}

\title{Analyticity of the percolation density $\theta$ in all dimensions}

\author[1]{Christoforos Panagiotis}
\author[2]{Agelos Georgakopoulos}
\affil[1,2]{{Mathematics Institute}\\
        {University of Warwick}\\
        {CV4 7AL, UK}\thanks{Supported by the European Research Council (ERC) under the European Union's Horizon 2020 research and innovation programme (grant agreement No 639046).}\\}
        
\begin{document}
\date{}
\maketitle

\begin{abstract}
We prove that for Bernoulli bond percolation on $\Z^d$, $d\geq 2$ the
percolation density is an analytic function of the parameter in the supercritical interval $(p_c,1]$. This answers a question of Kesten from 1981.
%For this, we show that for every $p>p_c$, whenever the cluster of the origin $C_o$ is finite, there exists a minimal edge cut of closed edges separating the origin from infinity which has an exponential tail. This is in contrast to the behaviour of the boundary of $C_o$ which has only a stretched exponential tail on the interval $(p_c,1-p_c]$ \cite{KeZhaPro,ExpGrowth}. As a corollary, we obtain an earlier result of Pete \cite{Pete} about the exponential decay of the probability that $C_o$ is finite but has a lot of boundary edges with the infinite component.
\end{abstract}

{\bf Keywords}: analyticity, percolation density, interface, renormalization\\ inclusion-exclusion.

\section{Introduction}

Perhaps the first occurrence of questions of smoothness in percolation theory dates back to the work of Sykes \& Essam \cite{SykesEssam}. Trying to compute the value of $p_c$ for bond percolation on the square lattice $\Z^2$, Sykes \& Essam obtained that the free energy (aka.\ mean number of clusters per vertex)  $\kappa(p):=\Ex_p(|C_o|^{-1})$, where $C_o$ denotes the cluster of the origin, satisfies the functional equation $\kappa(p)=\kappa(1-p)+\phi(p)$ for some polynomial $\phi(p)$. Under the assumption of smoothness of $\kappa$ for every value of the parameter $p$ other than $p_c$, at which it is conjectured that $\kappa$ has a singularity, they obtained that $p_c=1/2$ due to the symmetry of the functional equation around $1/2$. Their work generated considerable interest, and a lot of the early work in percolation was focused on the smoothness of functions like $\kappa$ and $\chi:= \mathbb{E}_p(|C_o|)$ that describe the macroscopic behaviour of its clusters. Kunz \& Souillard \cite{KunSou} proved that $\kappa$ is analytic for small enough $p$. Grimmett \cite{GriDif} proved that $\kappa$ is $C^\infty$ for $p\neq p_c$ in the case $d=2$. A breakthrough was made by Kesten \cite{Ke81}, who proved that $\kappa$ and $\chi$ are analytic on $[0,p_c)$ for all $d\geq 2$. (Despite all the efforts, the argument of Sykes \& Essam has never been made rigorous, and all proofs of the fact that $p_c=1/2$  when $d=2$ use different methods, see e.g.\ \cite{BoRioShor,KestenCritical}.)

Except for the special case of $\kappa$ on $\Z^2$ (and other planar lattices), smoothness results are harder to obtain in the supercritical interval $(p_c,1]$, partly because the cluster size distribution $P_n:=\Pr_p(|C(o)| = n)$ has an exponential tail below $p_c$ \cite{AB,MenCoi} but not above $p_c$ \cite{AiDeSoLow}. Still, it is known that $\kappa$ and $\theta:=\Pr(|C_o|=\infty)$ are infinitely  differentiable for $p\in (p_c,1]$ (see \cite{ChChNeBer} or \cite[\S 8.7]{Grimmett} and references therein). It was a well-known open question, dating back to \cite{Ke81} at least, and appearing in several textbooks (\cite[Problem 6]{KestenBook},\cite{GrimmettDisordered,Grimmett}), whether $\theta$ is analytic for $p\in (p_c,1]$ for percolation on the hypercubic lattice $\Z^d, d\geq 2$. This paper answers this question in the affirmative. %, we prove that $\theta$ is analytic above $p_c$ for all dimensions $d\geq 3$. 

Part of the interest for this question comes form Griffiths' \cite{Griffiths} discovery of models, constructed by applying the Ising model on 2-dimensional percolation clusters, in which the free energy is infinitely differentiable but not analytic. This phenomenon is since called a \defi{Griffiths singularity}, see \cite{EntGri} for an overview and further references. 

The study of the analytical properties of the free energy is a common theme in several models of Statistical Mechanics. Perhaps the most famous such example is Onsager's exact calculation of the free energy of the square-lattice Ising model \cite{Onsager}. A corollary of this calculation is the computation of the critical temperature, as well as the analyticity of the free energy for all temperatures other than the critical one. See also \cite{KLM} for an alternative proof of the latter result. The analytical properties of the free energy have also been studied for the $q$-Potts model, which generalizes the Ising model. For this model, the analyticity of the free energy has been proved for $d=2$ and all supercritical temperatures when $q$ is large enough \cite{Complete}. 

Before our result, partial progress on the analyticity of the percolation density had been made by Braga et.al.\ \cite{BrPrSaSco,BrPrSa}, who showed that $\theta$ is analytic for $p$ close enough to $1$. We recently settled the 2-dimensional case  \cite{analyticity} by introducing a notion of \defi{interfaces} that has already found further applications \cite{SitePercoPlane}. Shortly after our paper \cite{analyticity}  was released, Hermon and Hutchcroft \cite{HerHutSup} proved that $\theta$ is analytic above $p_c$ for every non-amenable transitive graph, by establishing that the cluster size distribution $P_n$ has an exponential tail in the whole supercritical interval.

\medskip
Our proof of the analyticity of $\theta(p)$  on the supercritical interval involves expressing the function as an infinite sum of polynomials $f_n(p)$, and then extending $p$ to the complex plane. To show that this sum converges to an analytic function, we need suitable upper bounds for $|f_n(z)|$ inside regions of the complex plane. These bounds can be obtained once $f_n$ decays to $0$ fast enough. Possible candidates for $f_n$ are the probabilities $P_n$, since one can write $\theta(p)=1-\sum_{n} P_n(p)$. However, as $P_n$ decays slower than exponentially for $p>p_c$ \cite{Grimmett,KunSou}, the bounds we obtain for $|P_n(z)|$ do not provide the desired convergence. Instead of working with the whole of $C_o$, an alternative approach is to work with the `perimeter' of its boundary. As we observed in \cite{analyticity}, in the planar case, the suitable notion of perimeter turns out to be the \defi{interface} of $C_o$. An interface consists of a set of closed edges that we call the \defi{boundary} of the interface, and separate $C_o$ from infinity, and   a set of open edges that is part of $C_o$ and incident to the boundary (\fig{figscv}). With this definition we obtain that $1-\theta(p)$ coincides with the probability $\Pr_p(\text{at least one \pint\ occurs})$, which can be expanded as a sum over all \pint s, i.e.\ over all subgraphs of the lattice that could potentially coincide with the interface of $C_o$. Since several \pint s might occur simultaneously, we have to apply the inclusion-exclusion principle. Thus we obtain
$$1-\theta(p)=\sum (-1)^{c(M)+1}\Pr_p(\text{M occurs}),$$
where the sum ranges over all finite collections of edge-disjoint \pint s, called \defi{\mpint s}, and $c(M)$ denotes the number of \pint s of the collection. 

\begin{figure}[htbp]
\vspace*{5mm}
\centering
\noindent

\begin{overpic}[width=.6\linewidth]{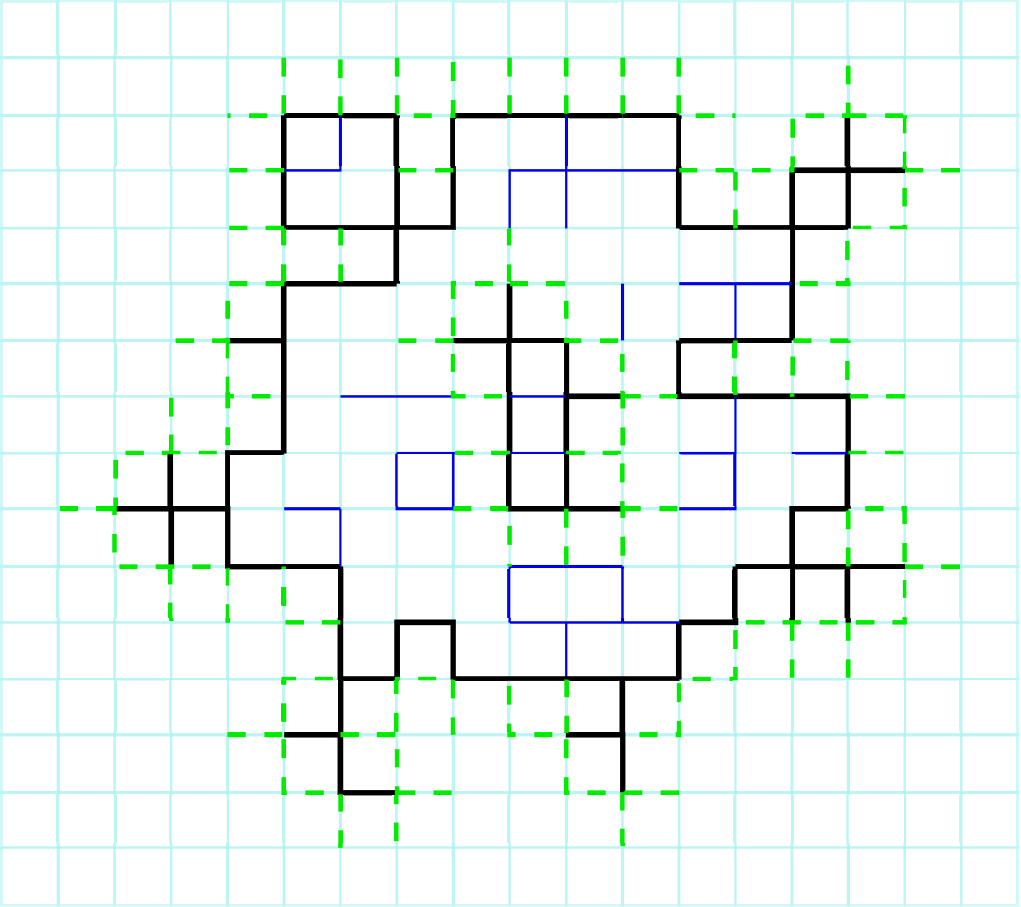}
% sphere generated from matlab % sphere; cpmmand
%\centering \begin{overpic}[width=.6\linewidth]{regions} 
\put(51,40){$o$}
\end{overpic}
\begin{minipage}[c]{0,9\textwidth}
\vspace*{5mm}
\caption{\small An example of two nested interfaces, depicted in dark solid lines. Dashed lines depict the boundary of the interface. The rest of the clusters is depicted in plain lines (blue, if colour is shown).} \label{figscv}
\end{minipage}
\end{figure}

For any plausible \mpint\ $M$, the probability $\Pr_p(\text{M occurs})$ is just $P_M(p):=p^{|M|}(1-p)^{|\partial M|}$ by the definitions, where $|M|$ and $|\partial M|$ denote the number of edges of the \mpint\ and its boundary, respectively (in \fig{figscv}, these are depicted in dark solid lines and dashed lines,  respectively). We can extend this polynomial expression to $\C$ hoping to obtain strong enough upper  bounds for $|P_M(z)|$. In the special case where $M$ comprises a single \pint, these bounds are obtained by combining the well-known coupling between supercritical bond percolation on $\Z^2$ and subcritical bond percolation on its dual with the exponential tail of $P_n$ on the subcritical interval. In the general case, the bounds are obtained by some combinatorial arguments and the BK inequality. See  \cite{analyticity} for details.

Our notion of interfaces can be generalised to higher dimensions in such a way that a unique interface is associated to any cluster. A slight modification of the above method still yields the analyticity of $\theta$ for the values of $p$ close to $1$, but not in the whole supercritical interval. The main obstacle is that for values of $p$ in the interval $(p_c,1-p_c)$, the distribution of the size of the interface of $C_o$ has only a stretched exponential tail, which follows from the work of Kesten and Zhang \cite{KeZhaPro}. (As we observed in \cite{ExpGrowth}, this behaviour holds for $p=1-p_c$ as well.)

In the same paper \cite{KeZhaPro}, Kesten and Zhang introduced some variants of the standard boundary of $C_o$ that are obtained by dividing the lattice $\Z^d$ into large boxes, and proved that these variants satisfy the desired exponential tail on the whole supercritical interval.\footnote{The threshold $p_c(H^d)$ in Kesten's and Zhang's original formulation was proved later to coincide with $p_c(\Z^d)$ by Grimmett and Marstrand \cite{GriMar}.} It is natural to try to apply our method to those variants, however, their occurrence does not prevent the origin from being connected to infinity. Instead, we expand these variants into larger objects that we call \defi{separating components}. In Section \ref{theta} (\Lr{Co finite}) we prove that whenever a separating component $S$ occurs, we can find inside $S$ and its boundary $\partial_\sboxt S$ an edge cut  $\partial^b \mathcal{S}_o$ separating the origin from infinity. Conversely, some separating component occurs whenever $C_o$ is finite (\Lr{an S occurs}). Thus we can express $\theta$ in terms of the occurrence of separating components (see \eqref{cases} in \Sr{theta sum S}). 
In contrast to the behaviour of the boundary of $C_o$ which has only a stretched exponential tail on the interval $(p_c,1-p_c]$, $\partial^b \mathcal{S}_o$ has an exponential tail in the whole supercritical interval. We plug this exponential decay into a general tool from \cite{analyticity} (\Tr{cor general}), which rests on an application of the Weierstrass M-test to polynomials of the form $p^m(1-p)^n$, to obtain the analyticity of $\theta$ above $p_c$ in \Sr{expand}. In Section \ref{section tau} we use similar arguments to prove the analyticity of the $k$-point function $\tau$ and its truncation $\tau^f$, as well as of $\chi^f:= \mathbb{E}(|C(o)|; |C(o)|<\infty)$ and $\kappa$.

\medskip
Typically, $\partial^b \mathcal{S}_o$ has size of smaller magnitude than the boundary of $C_o$, and it is obtained from the latter by `smoothening' some of its parts with `fractal' structure. As a corollary, we re-obtain, in \Sr{Pete}, a result of Pete \cite{Pete} about the exponential decay of the probability that $C_o$ is finite but sends a lot of closed edges to the infinite component. 

\section{Preliminaries}

\subsection{Graph theory} \label{sec GT}

Consider an infinite connected graph $G=(V,E)$. Given a finite subgraph $H$ of $G$, we define its \defi{internal boundary} $\partial H$ to be the set of vertices of $H$ that are incident with an infinite component of $G\setminus H$.
We define the \defi{vertex boundary} $\partial^V H$ of $H$ as the set of vertices in $V\setminus V(H)$ that have a neighbour in $H$. The \defi{edge boundary} $\partial^E H$ is the set of edges in $E\setminus E(H)$ that are incident to $H$. 

Consider now a vertex $x$ of $G$. We say that a set $S$ of edges of $G$ is an \defi{edge cut} of $x$ if $x$ belongs to a finite component of $G-S$. We say that $S$ is a \defi{minimal edge cut} of $x$ if it is minimal with respect to inclusion. For a finite connected subgraph $H$ of $G$, its minimal edge cut is the set of edges with one endvertex in $H$ and one in an infinite component of $G\setminus H$.

The \defi{diameter} $diam(H)$ of $H$ is defined as $\max_{x,y\in V(H)}\{d_G(x,y)\}$ where $d_G(x,y)$ denotes the graph-theoretic distance between $x$ and $y$.

\subsection{Percolation}

We recall some standard definitions of percolation theory in order to fix our notation. For more details the reader can consult e.g.\ \cite{Grimmett,LyonsBook}.

Consider the hypercubic lattice $\mathbb{L}^d=(\Z^d,E(\Z^d))$, the vertices of which are the points in $\R^d$ with integer coordinates, and two vertices are connected with an edge when they have distance $1$. We let $\Omega:= \{0,1\}^{E(\Z^d)}$ be the set of \defi{percolation configurations} on $\mathbb{L}^d$. We say that an edge $e$ is \defi{closed} (respectively, \defi{open}) in a percolation configuration $\oo\in \Omega$, if $\oo(e)=0$ (resp.\ $\oo(e)=1$).

By Bernoulli, bond \defi{percolation} on $\mathbb{L}^d$ with parameter $p\in [0,1]$ we mean the random subgraph of $\mathbb{L}_d$ obtained by keeping each edge with probability $p$ and deleting it with probability $1-p$, with these decisions being independent of each other. The corresponding probability measure on the configurations of open and closed edges is denoted by $\Pr_p$. We also denote by $\Ex_p$ the expectation with respect to $\Pr_p$.

The \defi{percolation threshold} $p_c(\mathbb{L}^d)$ is defined by 
$$p_c(\mathbb{L}^d):= \sup\{p \mid \Pr_p(|C_o|=\infty)=0\},$$
where $o$ denotes the origin $(0,\ldots,0)\in \Z^d$, and its \defi{cluster} $C_o$  is the component of $o$ in the subgraph of $\mathbb{L}^d$ spanned by the open edges. We will write $C_o(\omega)$ when we want to emphasize the dependence of the cluster on the (random) percolation instance $\omega$.

\subsection{Analyticity}

In order to prove that $\theta$ and the other functions describing the macroscopic behaviour of our model are analytic we will utilize some results proved in \cite{analyticity}. The first result provides sufficient conditions for analyticity. The second result will be used when estimates for the analytic extensions of those functions are needed. 

We say that an event $E$ has \defi{complexity} $k$, if it is a disjoint union of a family of events $\seq{F}$ where each $F_n$ is measurable \wrt\ a set of edges of \g of cardinality at most $k$.

\begin{theorem}[\cite{analyticity}]\label{cor general}
Let $I\subset [0,1]$ be an interval and $f(p): I \to \R$ a function that can be expressed as a sum
$$f(p)=\sum_{n\in\mathbb{N}}\sum_{i\in L_n} a_i \Pr_p(E_{n,i})$$ 
where $a_n\in \mathbb{R}$, $L_n$ is a finite index set, and each $E_{n,i}$ is an event measurable with respect to
$\Pr_p$ (in particular, the above sum converges absolutely for every $p\in I$).
Suppose that
\begin{enumerate}
\item \label{cg i} each $E_{n,i}$ has complexity of order $\Theta(n)$, and 
\item \label{cg ii} for each open subinterval $J\subset I$ there is a constant $0<c_J<1$ such that
$\sum_{i\in L_n} a_i \Pr_p(E_{n,i})=O({c_J}^n)$.
\end{enumerate} 
Then $f(p)$ is analytic in $I$.
\end{theorem}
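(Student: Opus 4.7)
The strategy is to complexify and apply Weierstrass' theorem on uniform limits of analytic functions. Setting $g_n(z):=\sum_{i\in L_n}a_i\Pr_z(E_{n,i})$, I plan to extend each $g_n$ analytically to a complex strip around every point of $I\cap(0,1)$, bound $|g_n|$ geometrically in $n$ on this strip, and apply the Weierstrass M-test to conclude that $f=\sum_n g_n$ is analytic on $I$.

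The extension comes from the complexity hypothesis. Since $E_{n,i}$ is a countable disjoint union of cylinder events each depending on at most $k=Cn$ edges for some constant $C$, each $\Pr_p(E_{n,i})$ decomposes as
\[\Pr_p(E_{n,i})=\sum_j p^{a_j}(1-p)^{b_j},\qquad a_j+b_j\le k,\]
and the same formal sum, read with complex $z$, defines $\Pr_z(E_{n,i})$. Fixing $p\in I\cap(0,1)$ and writing $z=p+w$, the elementary estimate
\[|z^a(1-z)^b|\le(1+\eps)^{a+b}\,p^a(1-p)^b,\qquad \eps:=|w|/\min(p,1-p),\]
gives, after termwise summation, $|\Pr_z(E_{n,i})|\le(1+\eps)^{Cn}\Pr_p(E_{n,i})$. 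The formal series therefore converges locally uniformly on the strip $|w|<\min(p,1-p)$ and defines an analytic function there; since $L_n$ is finite, so does $g_n$, with $|g_n(z)|\le(1+\eps)^{Cn}\sum_{i\in L_n}|a_i|\Pr_p(E_{n,i})$.

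For the M-test I would fix a closed subinterval $J$ of $I\cap(0,1)$ and shrink the complex strip so that $(1+\eps)^C c_{J'}<1$ for some open $J'\supset J$ inside $I$. Invoking the decay hypothesis---applied with $|a_i|$ in place of $a_i$, a form which is equivalent in the applications (e.g.\ the inclusion--exclusion expansion of $\theta$) where the events $E_{n,i}$ for fixed $n$ are pairwise incompatible and the $a_i$ thus share a common sign---one obtains the majorant $O\bigl(((1+\eps)^C c_{J'})^n\bigr)$, valid uniformly on the chosen complex neighborhood of $J$. The Weierstrass M-test then gives uniform convergence of $\sum g_n$ on this neighborhood, and Weierstrass' theorem on uniform limits of analytic functions yields analyticity of $f$ there; since $J$ was arbitrary, $f$ is analytic on all of $I$.

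The central difficulty is calibrating the strip width against the given decay rate: complexification inflates each term's bound by $(1+\eps)^{Cn}$, and one must shrink $\eps$ (i.e.\ the strip) until the product $(1+\eps)^C c_{J'}$ dips below $1$. This is possible precisely because the complexity is $\Theta(n)$ (so that $C$ is a fixed constant) and the decay is strictly geometric ($c_J<1$); weakening either assumption would break the argument, and indeed this is why the authors invest so much effort elsewhere in producing an exponentially decaying notion of perimeter---the separating component boundary $\partial^b\mathcal S_o$---rather than working with the boundary of $C_o$ directly.
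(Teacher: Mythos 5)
Your strategy (complexify, bound $|z^a(1-z)^b|$ by inflating $p^a(1-p)^b$, then apply the Weierstrass M-test) is indeed the route behind this theorem: the paper itself says the tool ``rests on an application of the Weierstrass M-test to polynomials of the form $p^m(1-p)^n$'', and \Lr{C equals S NN} is exactly the single-term estimate you rederive. Two points, however, need repair.

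First, the handling of signs. You correctly notice that your M-test majorant requires a bound on $\sum_{i\in L_n}|a_i|\Pr_p(E_{n,i})$ rather than on the signed sum, and you assert that the two are equivalent in the applications because ``the events $E_{n,i}$ for fixed $n$ are pairwise incompatible and the $a_i$ thus share a common sign.'' Neither clause is true here: in the inclusion--exclusion expansion \eqref{inc-ex} the coefficients are $a_i=(-1)^{c(S)+1}$, which alternate in sign as $c(S)$ varies, and the events $\{S\text{ occurs}\}$ for distinct multi-collections $S$ are not disjoint --- if they were, no inclusion--exclusion would be needed. What actually saves the day is that \Lr{exp dec} bounds $\sum_{S\in MS^N_n}\Pr_q(\text{$S$ is bad})$, which dominates the \emph{unsigned} sum $\sum_{i\in L_n}|a_i|\Pr_q(E_{n,i})$. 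So the hypothesis one really needs --- and the one the paper verifies in every application --- is the $|a_i|$-version; you should simply state this rather than invoke a spurious monotonicity of the coefficients.

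Second, your argument is confined to the interior $I\cap(0,1)$: you extend to strips around points $p$ with $\min(p,1-p)>0$, and conclude analyticity on ``all of $I$'' from analyticity on closed subintervals of $I\cap(0,1)$. But the theorem is applied in this paper with $I=(p_c,1]$, and at $p=1$ your denominator $\min(p,1-p)$ vanishes, so your estimate gives nothing. A separate estimate near $p=1$ is required, of the kind supplied by the second clause of \Lr{C equals S NN}: for $z\in D(1,\delta)$ one compares against the reference point $p'=1-\delta$ and uses $|z|\leq 1+\delta$, $|1-z|\leq\delta$ to obtain $|z^a(1-z)^b|\leq c_\delta^{a+b}(1-\delta)^a\delta^b$ with $c_\delta=\tfrac{1+\delta}{1-\delta}$. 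Your proof, as written, leaves the right endpoint uncovered.
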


In the following lemma $D(p,\delta)$ denotes the open disk of radius $\delta$ centred at $p$.

\begin{lemma}[\cite{analyticity}] \label{C equals S NN}
For every finite subgraph $S$ of $G$ and every $o\in V(G)$, the function $P(p) :=\Pr_p(C_o=S)$ admits an entire extension $P(z), z\in \C$, \st\ \fe\ $0<\delta<1$, every $0\leq p<1$ with $p+\delta<1$ and every $z\in D(p,\delta)$, we have
$$|P(z)| \leq c^{|\partial^E S|} P(p+\delta),$$ where $c=c_{\delta,p}:= \frac{1-p+\delta}{1-p-\delta}$. Moreover, $|P(z)| \leq c_\delta^{|\partial^E S|} P(1-\delta)$ \fe\  $z\in D(1,\delta)$, where $c_\delta:= \frac{1+\delta}{1-\delta}$.
\end{lemma}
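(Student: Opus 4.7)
My plan is to recognise that $P(p)$ is literally a polynomial of a specific form, and then bound its natural entire extension factor-by-factor via the triangle inequality.

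First, I would write $P(p)$ explicitly. If $o\notin V(S)$ then $P\equiv 0$ and there is nothing to prove, so suppose $o\in V(S)$. The event $\{C_o=S\}$ is the intersection of the independent events ``every edge of $E(S)$ is open'' and ``every edge of $\partial^E S$ is closed''; hence, by the product form of Bernoulli percolation,
$$P(p)=p^{|E(S)|}(1-p)^{|\partial^E S|}.$$
This is a polynomial in $p$, and therefore admits the unique entire extension $P(z)=z^{|E(S)|}(1-z)^{|\partial^E S|}$.

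For the first inequality I would apply the triangle inequality to each factor separately. For $z\in D(p,\delta)$ one has $|z|\leq p+\delta$ and $|1-z|=|(1-p)-(z-p)|\leq (1-p)+\delta$. Factoring the ratio as
$$\frac{|P(z)|}{P(p+\delta)}=\left|\frac{z}{p+\delta}\right|^{|E(S)|}\left|\frac{1-z}{1-p-\delta}\right|^{|\partial^E S|},$$
the assumption $p+\delta<1$ guarantees $P(p+\delta)>0$; the first factor is bounded by $1$, and the second by $\frac{1-p+\delta}{1-p-\delta}=c$, which yields the claim.

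The second inequality is needed precisely because the constant $c$ blows up as $p+\delta\uparrow 1$; to cover the disk around $1$ I would normalise against $P(1-\delta)$ instead. The same factor-by-factor argument, using $|1-z|\leq \delta$ and $|z|\leq 1+\delta$ for $z\in D(1,\delta)$, gives
$$\frac{|P(z)|}{P(1-\delta)}=\left|\frac{z}{1-\delta}\right|^{|E(S)|}\left|\frac{1-z}{\delta}\right|^{|\partial^E S|},$$
from which the desired bound follows by reading off the obvious pointwise estimates on the two factors. I do not anticipate any real obstacle: the whole lemma reduces to a routine complex-analytic estimate once $P(p)$ is recognised as a concrete two-factor polynomial, and the only mild bookkeeping concerns choosing the appropriate reference point ($p+\delta$ or $1-\delta$) so that the denominator does not vanish.
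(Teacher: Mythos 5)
Your approach — writing $P(p)=p^{|E(S)|}(1-p)^{|\partial^E S|}$ explicitly, extending to $P(z)=z^{|E(S)|}(1-z)^{|\partial^E S|}$, and bounding each factor via the triangle inequality — is exactly the right (and surely the intended) one, and your treatment of the disk $D(p,\delta)$ with $p+\delta<1$ is correct: there $|z|<p+\delta$ makes the first factor of the ratio at most $1$, and $|1-z|<(1-p)+\delta$ makes the second at most $c^{|\partial^E S|}$.

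However, the final sentence of your proposal glosses over a genuine discrepancy in the second inequality. For $z\in D(1,\delta)$ the roles of the two exponents are reversed: in the factorisation
$$\frac{|P(z)|}{P(1-\delta)}=\left|\frac{z}{1-\delta}\right|^{|E(S)|}\left|\frac{1-z}{\delta}\right|^{|\partial^E S|},$$
it is the factor carrying $|\partial^E S|$ that is bounded by $1$ (since $|1-z|<\delta$), while the factor bounded by $c_\delta=\frac{1+\delta}{1-\delta}$ carries $|E(S)|$. So "reading off the obvious estimates" yields $|P(z)|\leq c_\delta^{|E(S)|}P(1-\delta)$, \emph{not} $c_\delta^{|\partial^E S|}P(1-\delta)$. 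The stated exponent cannot be salvaged in general: taking $z=1+\delta-\epsilon$ and letting $\epsilon\to 0^+$, the ratio $|P(z)|/P(1-\delta)$ tends to $c_\delta^{|E(S)|}$, which strictly exceeds $c_\delta^{|\partial^E S|}$ whenever $|E(S)|>|\partial^E S|$ (e.g.\ for $S$ a large box in $\mathbb{Z}^d$). You should state the bound your estimates actually give, namely $c_\delta^{|E(S)|}$ — this is almost certainly a transcription slip in the lemma as quoted here, and it is harmless for the paper's applications (where only the fact that the exponent is $O(|S|)$ matters), but asserting "the desired bound follows" when your computation produces a different exponent is the kind of gap you need to flag rather than paper over.
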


\section{Analyticity of $\theta$}\label{theta}

In this section we will prove that $\theta$ is analytic on the supercritical interval for every $d\geq 3$. (The case $d= 2$ was handled in \cite{analyticity}.)

\begin{theorem}\label{analytic}
For Bernoulli bond percolation on $\mathbb{L}^d , d\geq 3$, the percolation density $\theta(p)$ is analytic on $(p_c,1]$.
\end{theorem}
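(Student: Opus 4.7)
The strategy is to generalize the interface-based approach used in the two-dimensional case in \cite{analyticity}, replacing interfaces with the separating components foreshadowed in the introduction. The target is an identity of the form
\[
1-\theta(p) \;=\; \sum_{M} (-1)^{c(M)+1}\, P_M(p),
\]
where the sum ranges over finite collections $M$ of edge-disjoint separating components (``multi-separating-components''), $c(M)$ denotes the number of components in $M$, and $P_M(p) := p^{|M|}(1-p)^{|\partial M|}$. Once this identity is established, the proof is reduced to verifying the two hypotheses of \Tr{cor general} for the resulting expansion.

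To obtain the identity, I would first give a formal definition of a separating component by expanding the Kesten--Zhang coarse-grained boundary of $C_o$ at an appropriate block-renormalization scale, so that each component is a connected object at the block scale whose size is controlled by its boundary. The two structural inputs are recorded as \Lr{Co finite} and \Lr{an S occurs}: whenever a separating component $S$ occurs, an edge cut $\partial^b \mathcal{S}_o$ contained in $S \cup \partial_\sboxt S$ separates the origin from infinity; conversely, if $C_o$ is finite then some separating component necessarily occurs. Together these lemmas identify the event $\{|C_o|<\infty\}$ with the union, over all countably many candidate separating components $S$, of the events $\{S \text{ occurs}\}$. Since each such event has probability $p^{|S|}(1-p)^{|\partial_\sboxt S|}$ by the very definition of a separating component, the standard inclusion--exclusion expansion of this union yields the displayed identity, with the joint occurrence probability of a multi-separating-component factoring as $p^{|M|}(1-p)^{|\partial M|}$ because all edges involved are distinct.

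It remains to verify the hypotheses of \Tr{cor general}. Grouping the multi-separating-components by the total size $n := |\partial^b M|$ of their effective boundary puts the sum in the required form $\sum_n f_n(p)$; hypothesis (i) on complexity is immediate, because each relevant $M$ is determined by $O(n)$ edges. The decisive, and most delicate, step is hypothesis (ii): on every compact subinterval $J \subset (p_c,1]$ one needs $f_n(p)=O(c_J^n)$ for some $c_J<1$. This is precisely the point at which a naive boundary expansion breaks down, since $|\partial C_o|$ has only a stretched exponential tail on $(p_c,1-p_c]$ by \cite{KeZhaPro}. The main obstacle is therefore to establish that $\partial^b \mathcal{S}_o$, unlike the full boundary of $C_o$, enjoys an exponential tail \emph{uniformly on the whole supercritical interval}; for this I would appeal to the Kesten--Zhang renormalization \cite{KeZhaPro} together with the Grimmett--Marstrand theorem \cite{GriMar}, which together make this exponential decay a property of the entire interval $(p_c,1]$. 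Combined with an at-most-exponential-in-$n$ count of edge-disjoint multi-separating-components of given effective boundary size (itself a consequence of the block-scale connectedness of each component), and with the BK inequality to handle joint occurrences as in \cite{analyticity}, this furnishes the required geometric bound $f_n(p)=O(c_J^n)$. Applying \Tr{cor general} then yields the analyticity of $\theta$ on $(p_c,1]$.
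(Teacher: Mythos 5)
Your overall strategy matches the paper's: renormalize the lattice into boxes, define separating components that expand the Kesten--Zhang coarse-grained boundary, prove a two-way correspondence between ``some separating component occurs'' and ``$C_o$ is finite'', expand by inclusion--exclusion, and feed an exponential-decay estimate into \Tr{cor general}. However, there is a concrete error at the heart of your expansion. You claim that $\Pr_p(\text{$S$ occurs}) = p^{|S|}(1-p)^{|\partial_\sboxt S|}$ ``by the very definition of a separating component'' and that the multi-component probability factors as $P_M(p)=p^{|M|}(1-p)^{|\partial M|}$. That monomial factorization was the whole point of the 2-dimensional interface picture in \cite{analyticity}, where the occurrence of an interface literally requires a prescribed set of edges to be open and another to be closed. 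It does \emph{not} survive the renormalization: here $|S|$ counts boxes, and the event ``$S$ occurs'' requires every box in $S$ to be bad, every box in $\partial_\sboxt S$ to be good, and a witness configuration satisfying condition \ref{occ iii}. Good/bad are crossing events inside $N$-boxes, so $\Pr_p(\text{$S$ occurs})$ is a genuine polynomial of degree $\Theta(N^d|S|)$, not a monomial, and $P_M$ as you define it is simply not the probability of anything. In particular you cannot verify hypothesis (ii) of \Tr{cor general} by bounding $|p^{|M|}(1-p)^{|\partial M|}|$ on a complex disk; the correct route (as in \Lr{C equals S NN}) is to use the fact that each event is determined by $O(N^d n)$ edges and that the event-probabilities, not monomials, are controlled on disks.

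Two smaller omissions: you need to split off the event $\{diam(C_o)<N/5\}$, since a tiny $C_o$ can be finite without any separating component occurring, and the paper does not in fact use the BK inequality here --- in the renormalized picture the separating components in a collection are pairwise $N\Ls$-disjoint, so one gets independence of disjoint boxes and a direct counting argument (\Lr{exp dec}) instead. The BK step was specific to the planar interface expansion.
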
 

\subsection{Setting up the renormalisation} \label{sec boxes}

We start by introducing some necessary definitions. Consider a positive integer $N$. For every vertex $x$ of $\Z^d$, we let $B(x)=B(x,N)$ denote the box $\{y\in \Z^d: \lVert y-Nx \rVert_{\infty}\leq 3N/4\}$. With a slight abuse, we will use the same notation $B(x)$ to also denote the corresponding subset of $\R^d$, namely $\{y\in \R^d: \lVert y-Nx \rVert_{\infty}\leq 3N/4\}$.

The collection of all these boxes can be thought of as the vertex set of graph canonically isomorphic to $\Z^d$. We will denote this graph by $N\mathbb{L}^d$. Whenever we talk about  percolation (clusters) from now on, we will be referring to percolation, with a fixed parameter $p>p_c$, on $\mathbb{L}^d$ and not on $N\mathbb{L}^d$; we will never percolate the latter. 

For any  percolation cluster $C$, %of diameter at least $N/5$, 
we denote by $C(N)$ the set of boxes $B$ such that the subgraph of $C$ induced by its vertices lying in $B$ has a component of diameter at least $N/5$. The boxes with this property will be called \defi{C-substantial}. Notice that $C(N)$ is a connected subgraph of $N\mathbb{L}^d$.
The internal boundary of $C(N)$ is denoted by $\partial C(N)$ following the terminology of \Sr{sec GT}. Notice that $\partial C(N)$ is not necessarily connected. For technical reasons, we would like it to be, and therefore we modify our lattice by adding the diagonals: we introduce a new graph $N\mathbb{L}^d_\sboxt$, the vertices of which are the boxes $B(x), x\in \Z^d$, and we connect two boxes with an edge of $N\mathbb{L}^d_\sboxt$ whenever they have non-empty intersection. When $N=1$, the vertex set of $\mathbb{L}^d_\sboxt$ is simply $\Z^d$. It is not too hard to show (see  \cite[Theorem 5.1]{TimCut}) that 
\labtequ{timar}{If $C$ is finite then $\partial C(N)$ is a connected subgraph of $N\mathbb{L}^d_\sboxt$.}

Given two diagonally opposite neighbours $x$, $y$ of $\mathbb{L}^d$, we will write $B(x,y)$ for the intersection $B(x) \cap B(y)$. A percolation cluster $C$ is a \defi{crossing cluster}  for some box $B(x)$ or $B(x,y)$, if $C$ contains a vertex from each of the $(d-1)$-dimensional faces of that box. We say that a box $B(x)$ is \defi{good} in a percolation configuration $\omega$ if it has a crossing cluster $C$ with the property that the intersection of $C$ with each of the boxes $B(x,y)$ contains a crossing cluster (of $B(x,y)$), and every other cluster of $B(x)$ has diameter less than $N/5$. A box that is not good will be called \defi{bad}. It is known \cite[Theorem 7.61]{Grimmett} that, for every $p>p_c$, the probability of having a crossing cluster and no other cluster of diameter greater than $N/5$ converges to $1$ as $N\to \infty$. Combining this with a union bound we easily deduce that 
\labtequ{good to 1}{for every $p>p_c$, the probability of any box being good converges to $1$ as $N\to \infty$.} 
We will say that a set of boxes is bad if all its boxes are bad.

Our definition of good boxes is slightly different than the standard one in that it asks for all boxes $B(x,y)$ to contain a crossing cluster. The reason for imposing  this additional property is because now 
\labtequ{star}{every $N\mathbb{L}^d_\sboxt$-component $B$ of good boxes contains a unique percolation cluster $C$ such that some box of $B$ is $C$-substantial (and in fact all boxes of $B$ are $C$-substantial).}  
This follows easily once we notice that this holds for pairs of neighbouring boxes.

Observe that the boxes in $\partial C(N)$ are never good. Indeed, if some box $B\in \partial C(N)$ is good, then $C$ connects all the $(d-1)$-dimensional faces of $B$, hence all $N\mathbb{L}^d$-neighbouring boxes of $B$ contain a connected subgraph of $C$ of diameter at least $N/5$, and so they lie in $C(N)$. This contradicts the fact that $B$ belongs to $\partial C(N)$. 

Having introduced the above definitions, our aim now is to find a suitable expression for $1-\theta$ in terms of good and bad boxes surrounding $o$. 

%Let us assume that the component of the origin $o$ is finite and has diameter at least $N/5$ \mymargin{I am assuming this so that $\partial C_o(N)$ is non-empty. In \ref{cases} we consider both cases}. 

With the above definitions we have that, conditioning on the event that $C_o$ is finite and has diameter at least $N/5$, there is a non-empty $N\Ls$-connected subgraph of bad boxes that separates $o$ from infinity, namely $T:=\partial C_o(N)$. However, the event $\{|C_o|<\infty\}$ is not necessarily measurable with respect to the configuration inside $T$. In other words, we cannot express $1-\theta$ in terms of just the configuration inside $T$, and instead we have to explore the configuration inside the finite components surrounded by $T$. To this end, we will expand $\partial C_o(N)$ into a larger object. 

\subsection{Separating components} \label{sec SCs}

A \defi{separating component} is a $N\mathbb{L}^d_\sboxt$-connected set $S$ of boxes, such that $o$ lies either inside $S$ or in a finite component of $N\mathbb{L}^d_\sboxt\setminus S$. We will write $\partial_\sboxt S$ for its vertex boundary ---defined in \Sr{sec GT}--- when viewed as a subgraph of $N\mathbb{L}^d_\sboxt$. We say that $S$ \defi{occurs} in a configuration $\omega$ if all the following hold:
\begin{enumerate}
\item \label{occ i} all boxes in $S$ are bad; 
\item \label{occ ii} all boxes in $\partial_\sboxt S$ are good, and
\item \label{occ iii} there is a configuration $\omega'$ which coincides with $\omega$ in $S\cup \partial_\sboxt S$, such that $C_o(\omega')$ is finite, and $S$ contains $\partial C_o(\omega')(N)$.
\end{enumerate}
We will say that $\omega'$ is a \defi{witness} for the occurrence of $S$ if \ref{occ i}--\ref{occ iii} all hold. 

One way to interpret \ref{occ iii} is that there exists a minimal cut set $F$ surrounding $o$ with the property that all its edges inside $S\cup \partial_\sboxt S$ are closed in $\omega$. If there is an infinite path in $\omega$ starting from $o$, then it has to avoid the edges of $F$ lying in $S\cup \partial_\sboxt S$. As we will see, \ref{occ ii} makes this impossible without violating that $C_o(\omega')$ is finite.

Note that \ref{occ iii} implies that 
\labtequ{S omega}{$\partial^V C_o(\omega')$ (and $C_o(\omega')$) does not share a vertex with the infinite component of $\mathbb{L}^d \setminus  S$.}

\subsection{Expressing $\theta$ in terms of the probability of the occurrence of a separating component} \label{theta sum S}

In this section we show that $C_o$ is finite exactly when some separating component occurs, unless $diam(C_o)< N/5$ which is a case that is easy to deal with. This will allow us to express $\theta(p)$ in terms of the probability of the occurrence of a separating component (see \eqref{cases}). In the following section we will expand the latter as a sum (with inclusion-exclusion) over all possible separating components. The summands of this sum are well-behaved polynomials, that will allow us to apply \Tr{cor general} to deduce the analyticity of $\theta(p)$.

\begin{lemma} \label{an S occurs}
For every $p>p_c$ there is $N\in \N$ and an interval $(a,b)$ containing $p$ such that the following holds for every $q\in (a,b)\cap (p_c,1]$. Conditioning on $C_o$ being finite, and $diam(C_o)\geq N/5$, at least one separating component occurs almost surely.
\end{lemma}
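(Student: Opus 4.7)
The plan is to construct a specific separating component almost surely on the conditioning event, by taking $S$ to be the bad $N\mathbb{L}^d_\sboxt$-cluster (the maximal $N\mathbb{L}^d_\sboxt$-connected set of bad boxes) containing $T_0:=\partial C_o(N)$.

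\textbf{Choice of $N$ and $(a,b)$.} Whether a box $B(x)$ is good is determined by the percolation configuration on finitely many edges concentrated in and near $B(x)$, so the family $\{\mathbf{1}_{\{B(x)\text{ is bad}\}}\}_{x\in\Z^d}$ is $k$-dependent on $N\mathbb{L}^d_\sboxt$ for some $k=k(d)$. By Liggett--Schonmann--Stacey stochastic domination, there is a constant $\eps_0=\eps_0(d)>0$ such that whenever the marginal bad probability is at most $\eps_0$, the bad process is dominated by a subcritical Bernoulli site percolation on $N\mathbb{L}^d_\sboxt$ and consequently every bad $N\mathbb{L}^d_\sboxt$-cluster is a.s.\ finite. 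By \eqref{good to 1} at $p$ we may fix $N$ so large that $\Pr_p(B(0)\text{ bad})<\eps_0/2$; since for this fixed $N$ the quantity $\Pr_q(B(0)\text{ bad})$ is a polynomial in $q$, hence continuous, we may choose an open interval $(a,b)\ni p$ with $a>p_c$ on which $\Pr_q(B(0)\text{ bad})<\eps_0$. Thus, for every $q\in(a,b)$, every bad $N\mathbb{L}^d_\sboxt$-cluster is $\Pr_q$-a.s.\ finite.

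\textbf{Construction and verification of $S$.} Fix $q\in(a,b)\cap(p_c,1]$ and condition on $E:=\{C_o\text{ finite},\ diam(C_o)\geq N/5\}$, which has positive $\Pr_q$-measure, so the a.s.\ statement of the previous step persists under the conditioning. On $E$, set $T_0:=\partial C_o(N)$. The diameter assumption forces $B(0)\in C_o(N)$: either $C_o\subseteq B(0)$ (in which case $C_o\cap B(0)=C_o$ already has a component of diameter $\geq N/5$), or else there exists $w\in C_o$ with $d_{C_o}(o,w)=N/5$, and any such $w$ is connected to $o$ inside $C_o\cap B(0)$ because a $C_o$-geodesic of length $N/5$ starting at $o$ stays within $L^\infty$-distance $N/5<3N/4$ of $o$. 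Hence $C_o(N)$ is a finite connected $N\mathbb{L}^d$-subgraph containing $B(0)$, $T_0$ is a non-empty $N\mathbb{L}^d_\sboxt$-connected set of boxes by \eqref{timar} that surrounds $B(0)$, and every box of $T_0$ is bad (by the remark in the paragraph preceding \eqref{star}). Let $S$ be the bad $N\mathbb{L}^d_\sboxt$-cluster containing $T_0$, which by the first step is a.s.\ finite. Then $S$ is a separating component that occurs: condition \ref{occ i} is immediate; \ref{occ ii} holds by the maximality of $S$, as any bad box $*$-adjacent to $S$ would already be inside it; \ref{occ iii} holds with witness $\omega':=\omega$ itself, since $C_o(\omega)$ is finite and $\partial C_o(\omega)(N)=T_0\subseteq S$; and the separation requirement on $o$ is inherited from the fact that $T_0\subseteq S$ and $T_0$ surrounds $B(0)$.

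\textbf{Main obstacle.} The chief difficulty is obtaining uniform subcriticality of the bad-box process on an entire neighborhood of $p$, which is handled by combining \eqref{good to 1}, polynomial continuity of $q\mapsto\Pr_q(B(0)\text{ bad})$ at fixed $N$, and Liggett--Schonmann--Stacey. A more delicate technical point, which the above sketch passes over quickly, is that the $*$-connected surface $T_0$ actually separates $B(0)$ from infinity in $N\mathbb{L}^d_\sboxt$ (and hence the larger $S\supseteq T_0$ does too); this rests on \eqref{timar} applied to $T_0$ as the inner boundary of the finite connected $N\mathbb{L}^d$-subgraph $C_o(N)$ that contains $B(0)$.
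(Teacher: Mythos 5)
Your construction is the same as the paper's: take $S$ to be the maximal $N\mathbb{L}^d_\sboxt$-connected set of bad boxes containing $\partial C_o(N)$, note that conditions \ref{occ i} and \ref{occ ii} hold automatically by maximality and that $\omega'=\omega$ witnesses \ref{occ iii}, and reduce the whole matter to showing that $S$ is a.s.\ finite for a suitable $N$ and for all $q$ in a neighbourhood of $p$. Where you diverge is in how you establish finiteness. The paper handles this by forward-referencing \Lr{exp dec}, whose Peierls-type estimate $\sum_{T:\,|T|=n}\Pr_q(T\text{ is bad})\leq e^{-tn}$ (proved via lattice-animal counting, the Hardy--Ramanujan partition bound, and \cite[Theorem~7.61]{Grimmett}) directly yields $\Pr_q(|S|\geq n)\to 0$ by a union bound. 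You instead observe that the bad-box indicators form a finite-range-dependent field on $N\mathbb{L}^d_\sboxt$ and invoke Liggett--Schonmann--Stacey stochastic domination: once the marginal bad probability is below a threshold depending only on $d$, the bad set is dominated by subcritical Bernoulli site percolation and so a.s.\ has only finite clusters. The neighbourhood $(a,b)$ is then obtained from polynomial continuity of $q\mapsto\Pr_q(B(0)\text{ bad})$, exactly as in the paper. Both routes are legitimate; the paper's choice lets it reuse a bound it needs anyway for the analyticity argument, whereas yours substitutes a self-contained black-box domination theorem and avoids the lattice-animal counting. One small imprecision in your write-up: when arguing that $B(0)$ is $C_o$-substantial you equate the $C_o$-geodesic length with the ambient $\mathbb{L}^d$-diameter, which need not coincide; but the conclusion you need (that $\partial C_o(N)$ is nonempty so that $S$ exists) is what the paper also takes for granted, and your high-level reasoning is in the right spirit.
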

\begin{proof}
Let $S$ be the maximal connected subgraph of $N\mathbb{L}^d_\sboxt$ that contains $\partial C_o(N)$ and consists of bad boxes only. This $S$ exists whenever $C_o$ is finite and $diam(C_o)\geq N/5$ because $\partial C_o(N)$ is connected by \eqref{timar}.

We claim that there is some $N$ and an interval $(a,b)$ containing $p$ such that $S$ is $\Pr_q$-almost surely finite for every $q\in (a,b)\cap (p_c,1]$. For this, it suffices to show that for some large enough $N$, the probability $\Pr_q(\text{S has size at least $n$})$ converges to $0$ as $n$ tends to infinity for each such $q$. The latter follows by combining the union bound with \Lr{exp dec} below, which states that 
$$\sum_{T \text{ is a separating component of size } n} \Pr_q(\text{T is bad})\leq e^{-tn}$$ 
for some constant $t=t(p)>0$, for some $N$, and every $q$ in an interval $(a,b)\cap(p_c,1]$.

Note that conditions \ref{occ i} and \ref{occ ii} are automatically satisfied by the choice of $S$. The configuration $\omega':=\omega$ satisfies condition \ref{occ iii}, since $C_o(\omega)$ is finite, and $S$ contains $\partial C_o(\omega)(N)$ by definition. Thus $S$ occurs in $\omega$, as desired.
\end{proof}

%given a percolation instance  $\omega$ on \Lsat, we obtain a site percolation instance $\beta(\omega)$ on $\Ls$ by recording which boxes are good and which are bad: we let $\beta(\omega)(x)= 0$ if $B(x)$ is good and $\beta(\omega)(x)= 1$ if $B(x)$ is bad for each $x\in \mathbb{Z}^d$. Note that whether a box $B$ is good or bad depends on the state of the edges inside $B$ only. Since two boxes $B, B'$ overlap only when they are connected by an edge of $\Ls$, we can think of $\beta(\omega)$ as a realisation of an 1-dependent site percolation model on $\Ls$. By \eqref{good to 1} and \Tr{dependent}, $\beta(\omega)$ is subcritical when $N=N(p)$ is chosen large enough and $\omega$ is sampled from the law of $\Pr_p$. Since $S$ is a cluster of open (i.e.\ bad) sites of $\beta(\omega)$ by the definitions, it is almost surely  finite as claimed.

%Note that conditions \ref{occ i} and \ref{occ ii} are automatically satisfied by the choice of $S$. For \ref{occ iii}, let $\omega'$ be ... \mymargin{continue...}

%---------\\
%Is this true? Cite/prove \& move to a new preliminary section ...
%\begin{theorem}[] \label{dependent}
%For any 1-dependent site percolation model on $\Ls$ we have, $p_c>0$. 
%\end{theorem}
%---------

Note that the proof of \Lr{an S occurs} finds a concrete occurring separating component whenever $C_o$ is finite and $diam(C_o)\geq N/5$; we denote this separating component by $\mathcal{S}_o$ in this case.

\medskip
The next two lemmas provide a converse to \Lr{an S occurs}, namely that $C_o$ is finite whenever 
some separating component occurs. 

Whenever $\omega'$ is a witness for the occurrence of $S$, we let $R_o(\omega')$ denote the set of vertices of the infinite component of $\mathbb{L}^d\setminus C_o(\omega')$ lying in $S$. 

\begin{lemma}\label{finite}
Consider a separating component $S$, and assume that $S$ occurs in $\omega$. Let $\omega'$ be a witness of the occurrence of $S$. Then no vertex of $R_o(\omega')$ lies in $C_o(\omega)$.
\end{lemma}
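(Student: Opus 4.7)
My plan is to argue by contradiction. Assume some $v\in R_o(\omega')$ lies in $V(C_o(\omega))$, and fix an open $\omega$-path $P$ from $o$ to $v$. Traversing $P$ from $o$, let $u$ be the last vertex on $P$ belonging to $V(C_o(\omega'))$ (which exists since $o\in V(C_o(\omega'))$), and let $w$ be the successor of $u$ on $P$ (which exists since $v\in R_o(\omega')\subseteq\mathbb{L}^d\setminus V(C_o(\omega'))$). The edge $e=\{u,w\}$ is open in $\omega$ since it lies on $P$, and must be closed in $\omega'$: otherwise $w$ would lie in $C_o(\omega')$ alongside $u$, violating the maximality of $u$. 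The goal is then to show that $e$ lies within $S\cup\partial_\sboxt S$, whence $\omega(e)=\omega'(e)$ by condition~\ref{occ iii}, contradicting the fact that $e$ is open in $\omega$ but closed in $\omega'$.

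To place $e$ in the agreement region, I would first observe that $w\in I$, the infinite component of $\mathbb{L}^d\setminus V(C_o(\omega'))$: the subpath from $w$ to $v$ on $P$ stays outside $V(C_o(\omega'))$ by the maximality of $u$, so $w$ lies in the same component of $\mathbb{L}^d\setminus V(C_o(\omega'))$ as $v\in R_o(\omega')\subseteq I$. In particular $w\in\partial^V V(C_o(\omega'))$. Combining $u\in V(C_o(\omega'))$ and $w\in\partial^V V(C_o(\omega'))$ with~\eqref{S omega}, neither $u$ nor $w$ belongs to the infinite component $J$ of $\mathbb{L}^d\setminus S$, so each lies in $U_S$ or in a finite component of $\mathbb{L}^d\setminus U_S$. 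Since adjacent vertices of $\mathbb{L}^d$ share a common box for $N$ large enough, and since any box containing a vertex of $U_S$ is itself in $S\cup\partial_\sboxt S$ (it shares a vertex with a box of $S$, so is $N\mathbb{L}^d_\sboxt$-adjacent to it), the case in which at least one of $u,w$ lies in $U_S$ immediately yields the required contradiction.

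The main obstacle is the residual case in which both $u$ and $w$ sit inside a common finite component $F$ of $\mathbb{L}^d\setminus U_S$, possibly far from $U_S$; here the box containing $e$ need not a priori meet $S\cup\partial_\sboxt S$. To handle this I expect to exploit the full strength of~\ref{occ iii} together with~\eqref{timar}: the internal boundary $\partial C_o(\omega')(N)$ is $N\mathbb{L}^d_\sboxt$-connected and sits inside $S$, which confines $V(C_o(\omega'))$ to the region cupped by $S$. This should allow a smarter choice of the transition pair on $P$ — for instance by tracking the last entry of $P$ into $U_S$ (guaranteed since $v\in U_S$) and combining it with the transition across $V(C_o(\omega'))$ — so that the chosen transition edge has at least one endpoint in $U_S$, reducing the situation to the easy case above. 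This geometric step, making the witness boundary interact correctly with the coarse-graining, is the delicate heart of the argument.
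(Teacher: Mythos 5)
Your first two paragraphs set up a natural strategy: find the last vertex $u$ of $P$ in $V(C_o(\omega'))$, take the next edge $e=\{u,w\}$, observe that $e$ is open in $\omega$ but closed in $\omega'$, and hope to place $e$ inside $S\cup\partial_\sboxt S$, where $\omega$ and $\omega'$ agree. You correctly identify the obstruction: $u$ and $w$ may both sit inside a finite component of $N\mathbb{L}^d_\sboxt\setminus(S\cup\partial_\sboxt S)$, far from any box of $S$, so $e$ need not lie in the agreement region. This is a genuine obstruction, not merely a technicality: condition~\ref{occ iii} constrains $\partial C_o(\omega')(N)$ to lie in $S$ but does not prevent $C_o(\omega')$ from sending a ``tentacle'' into such a finite component, and $P$ may cross $\partial^E C_o(\omega')$ entirely inside that component. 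So there may be no transition edge at all inside $S\cup\partial_\sboxt S$, and your strategy of finding one cannot succeed.

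Your third paragraph gestures at a fix---``tracking the last entry of $P$ into $U_S$ and combining it with the transition across $V(C_o(\omega'))$''---but this is not an argument. The last entry of $P$ into $U_S$ may occur long after the (unique) transition across $V(C_o(\omega'))$, and there is no evident way to ``combine'' these to produce a transition edge with an endpoint in $U_S$; nothing in~\eqref{timar} or~\ref{occ iii} forces one to exist. The paper's proof avoids this entirely by not looking for a single offending edge. Instead it locates where $P$ crosses the minimal edge cut $E$ of $C_o(\omega')$, notes the crossing edge sits in a finite component $\mathcal{B}_{in}$ of $N\mathbb{L}^d_\sboxt\setminus(S\cup\partial_\sboxt S)$, and then \emph{re-routes} the subpath $Q$ of $P$ from $u$ back to the boundary $\mathcal{B}$ of $\mathcal{B}_{in}$: whenever $Q$ leaves $S\cup\partial_\sboxt S$, it must pass through the unique large cluster of a good component $F$ of $\partial_\sboxt S$ (by~\eqref{star}), so the excursion can be replaced by a path inside $F$. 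The re-routed $Q$ lies in $S\cup\partial_\sboxt S$, hence is open in $\omega'$, and it meets a $C_o(\omega')$-substantial box of $\mathcal{B}$, which forces $u\in C_o(\omega')$---the contradiction. This re-routing step, exploiting the goodness of $\partial_\sboxt S$ and the uniqueness of large crossing clusters, is the crux of the lemma and is absent from your proposal.
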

\begin{proof}
Assume that some vertex $u$ of $R_o(\omega')$ lies in $C_o(\omega)$; we will obtain a contradiction.  

Since $C_o(\omega)$ contains $u$, there must exist a path $P$ in $\omega$ connecting $o$ to $u$. This path cannot lie entirely in $S\cup\partial_\sboxt S$ because $\omega$ and $\omega'$ coincide in that set of boxes and $u\not\in C_o(\omega')$. Hence $N\Ls\setminus (S\cup\partial_\sboxt S)$ must have some finite component. Let $E$ denote the minimal edge cut of $C_o(\omega')$. Clearly, $P$ must intersect $E$, since $u$ lies in the infinite component of $\mathbb{L}^d\setminus C_o(\omega')$. Let $e$ be an edge of $E$ that $P$ contains. Notice that no common edge of $P$ and $E$ lies in $S\cup \partial_\sboxt S$, because the edges of $E$ are closed in $\omega'$, the edges of $P$ are open in $\omega$, and the two configurations coincide in $S\cup \partial_\sboxt S$. Hence $e$ must lie in one of the finite components $\mathcal{B}_{in}$ of $N\Ls\setminus (S\cup\partial_\sboxt S)$. Write $\mathcal{B}$ for the set of those boxes in $\partial_\sboxt S$ that have a $N\mathbb{L}^d_\sboxt$-neighbour in $\mathcal{B}_{in}$. (Thus $\mathcal{B}$ is the vertex boundary of $\mathcal{B}_{in}$.) See Figure \ref{drawing}.

\begin{figure}[!ht] 
   \centering
   \noindent

\begin{overpic}[width=.6\linewidth]{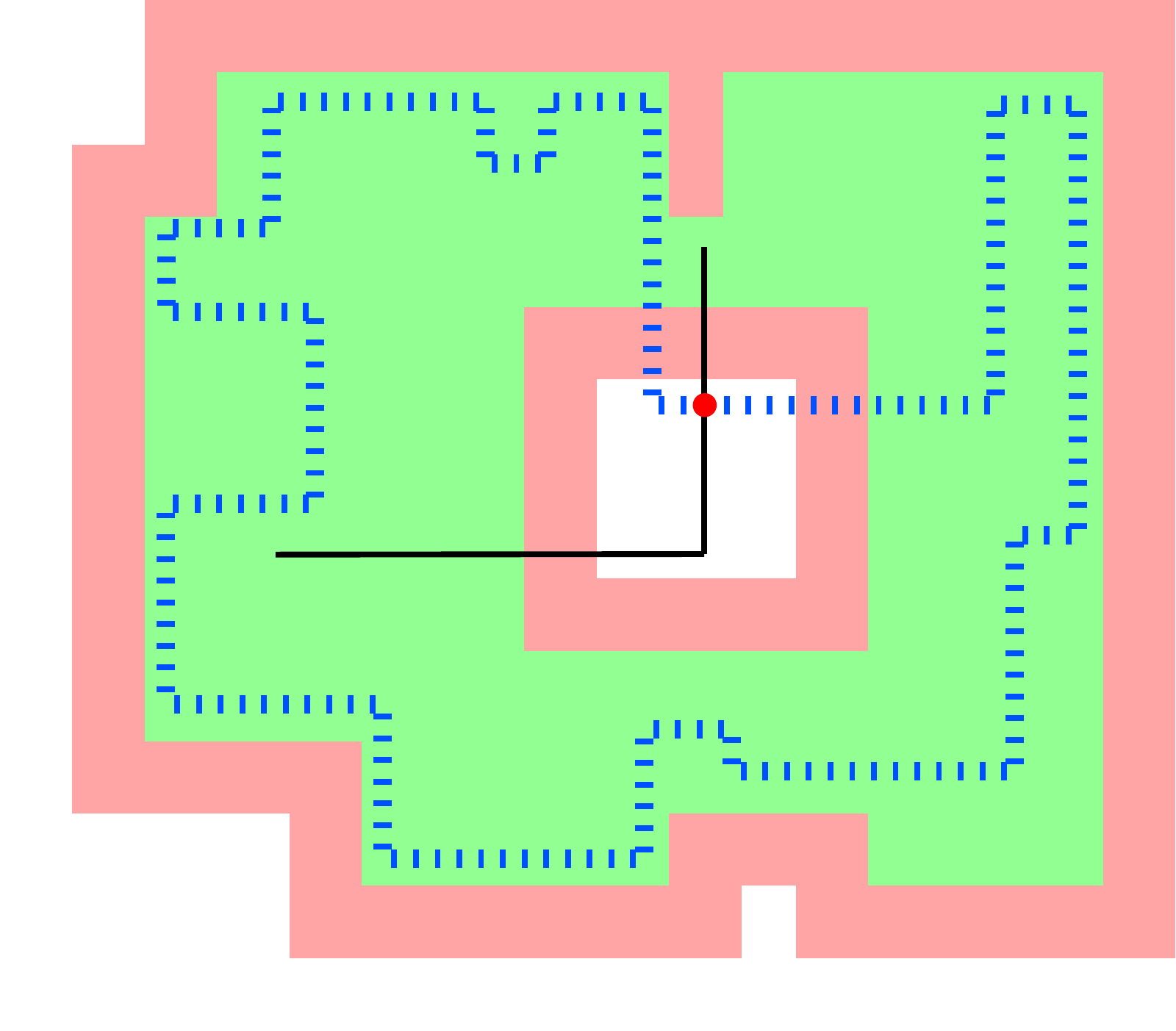} 
\put(23,36){$o$}
\put(62,65){$u$}
\put(32,73){$E$}
\put(36,43){$P$}
\end{overpic}
\caption{The situation in the proof of \Lr{finite}. The separating component $S$ is depicted in green and its boundary $\partial_\sboxt S$ in red (if colour is shown). When two boxes of $S$ and $\partial_\sboxt S$ overlap, their intersection is depicted also in green. The dashes depict the edges of the cut $E$, and $e$ is highlighted with a (red) dot.}\label{drawing}
\end{figure}

It is not hard to see that some box $B$ of $\mathcal{B}$ is $C_o(\omega')$-substantial, which then implies that all boxes of $\mathcal{B}$ are $C_o(\omega')$-substantial because they are all good. Indeed, notice that one of the two endvertices of $e$ lies in $C_o(\omega')$ by the definition of the set $E$. As $S$ contains a  $C_o(\omega')$-substantial box, some box $B$ of $\mathcal{B}$ must be $C_o(\omega')$-substantial, as claimed, because $\mathcal{B}$ is the vertex boundary of $\mathcal{B}_{in}$. 

Our aim now is to show that we can connect $u$ to the subgraph of $C_o(\omega')$ inside $\mathcal{B}$ with a path in $\omega'$ lying entirely in $S\cup\partial_\sboxt S$. This will imply that $u$ belongs to $C_o(\omega')$, contradicting that $u\in R_o(\omega')$.

For this, consider the subpath $Q$ of $P$ that starts at $u$ and ends at the last vertex of the intersection of $\mathcal{B}_{in}$ and $\mathcal{B}$ (notice that although $\mathcal{B}_{in}$ and $\mathcal{B}$ are disjoint sets of boxes, the subgraphs of $\mathbb{L}^d$ inside them overlap). If $Q$ is not contained in $S\cup\partial_\sboxt S$, then we can modify it to ensure that it does lie entirely in $S\cup\partial_\sboxt S$. Indeed, notice that each $N\mathbb{L}^d_\sboxt$-component $F$ of $\partial_\sboxt S$ contains a unique  $\omega$-cluster $C$ such that some box of $F$ is $C$-substantial by \eqref{star}, because all its boxes are good. Moreover, each time $Q$ exits $S\cup \partial_\sboxt S$, it has to first visit the unique such percolation cluster of some $N\mathbb{L}^d_\sboxt$-component $F$ of $\partial_\sboxt S$, and then eventually revisit the same percolation cluster of $F$. We can thus replace the subpaths of $Q$ that lie outside of $S\cup\partial_\sboxt S$ by open paths lying entirely in $\partial_\sboxt S$ that share the same endvertices. Thus we may assume that $Q$ is contained in $S\cup\partial_\sboxt S$ as claimed.

Now notice that $Q$ contains a subpath of diameter greater than $N/5$ lying entirely in some box $B$ of $\mathcal{B}$. This box is $C_o(\omega')$-substantial, hence $C_o(\omega')$ and $Q$ must meet. Then following the edges of $Q$, which are all open in $\omega'$, we arrive at $u$, and thus $u$ belongs $C_o(\omega')$, as desired. 
\end{proof}

We now use this to prove

\begin{lemma}\label{Co finite}
Whenever some separating component occurs in a configuration $\omega$, the cluster $C_o(\omega)$ is finite.
\end{lemma}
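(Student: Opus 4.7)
The plan is to prove \Lr{Co finite} by contradiction, invoking \Lr{finite}. Suppose $S$ occurs in $\omega$ with witness $\omega'$, so that $C_o(\omega')$ is finite by condition \ref{occ iii}, and assume for contradiction that $C_o(\omega)$ is infinite. Let $U_\infty$ denote the unique infinite component of $\mathbb{L}^d\setminus C_o(\omega')$. The strategy is to exhibit a vertex $u \in R_o(\omega')\cap C_o(\omega)$, i.e.\ a vertex lying in some box of $S$, in $U_\infty$, and connected to $o$ in $\omega$; this immediately contradicts \Lr{finite}.

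First I would fix an infinite self-avoiding $\omega$-open path $P = v_0 v_1\ldots$ starting at $o$, noting that all but finitely many $v_n$ lie in $U_\infty$ since $\mathbb{L}^d\setminus U_\infty$ is finite. Using that $o$ lies in $S$ or in a finite component of $N\Ls\setminus S$, together with the box-overlap property of $N\Ls$ (any two $\mathbb{L}^d$-adjacent vertices are contained in a common box, the boxes being wide enough for this), I would deduce that $P$ must enter some box of $S$. To locate such an entry inside $U_\infty$, I would track the box-walk $\beta(v_n)$ where $\beta(v_n)\in S$ is chosen whenever possible. Two scenarios arise: either $\beta(v_n)\in S$ for infinitely many $n$, in which case any sufficiently large such $n$ gives $v_n\in R_o(\omega')$ and we are done; or there is a last index $t^*$ with $\beta(v_{t^*})\in S$, after which the walk remains in a single infinite component $K^*$ of $N\Ls\setminus S$. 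In the second case I would take $u := v_{t^*}$.

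The hard part will be verifying $u\in U_\infty$ in the second scenario. The edge $v_{t^*} v_{t^*+1}$ lies in some box of $\partial_\sboxt S$, because $v_{t^*+1}$ lies in no box of $S$ while the two $\mathbb{L}^d$-adjacent endpoints share a common box of $N\Ls$; hence by condition \ref{occ iii} this edge is also $\omega'$-open. Ruling out $u$ lying in a finite component $F_i$ of $\mathbb{L}^d\setminus C_o(\omega')$ amounts to observing that an onward $\mathbb{L}^d$-edge along $P$'s tail would eventually connect $F_i$ to $U_\infty$, which is impossible for distinct components. Ruling out $u\in C_o(\omega')$ is the main obstacle, and would parallel the path-modification argument in the proof of \Lr{finite}: iteratively propagate $\omega'$-openness along $P$'s tail through the good boxes of $\partial_\sboxt S$, replacing any excursion of $P$ that leaves $\partial_\sboxt S$ by an $\omega'$-open shortcut inside the unique large cluster of the relevant $N\Ls$-component of good boxes (guaranteed by \eqref{star}); this would exhibit an infinite $\omega'$-open path from $o$, contradicting $|C_o(\omega')|<\infty$. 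Once $u\in U_\infty$ is secured, $u\in R_o(\omega')\cap C_o(\omega)$ yields the contradiction with \Lr{finite}, so $C_o(\omega)$ must be finite.
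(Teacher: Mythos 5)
Your overall strategy --- argue by contradiction and exhibit a vertex of $R_o(\omega')\cap C_o(\omega)$ to contradict \Lr{finite} --- is genuinely different from the paper's. The paper instead constructs an auxiliary configuration $\omega''$ that agrees with $\omega$ on $S\cup\partial_\sboxt S$ and is \emph{all open} elsewhere, shows $C_o(\omega'')$ is finite, extracts a minimal closed edge cut from $\omega''$ lying inside $S\cup\partial_\sboxt S$, and transfers that cut back to $\omega$. That route is slicker because it never has to analyse the trajectory of a putative infinite open path, and it yields the \emph{stronger} conclusion that is actually needed later (namely the existence of the cut $\partial^b\mathcal{S}_o\subseteq S\cup\partial_\sboxt S$, which \Tr{bad boundary} relies on); your proposal, even if repaired, would only give finiteness of $C_o(\omega)$.

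Your scenario $2$ has a genuine gap. After $t^*$ the tail of $P$ leaves $S\cup\partial_\sboxt S$ \emph{for good}: it lives in the infinite component of $N\Ls\setminus S$ and, since $S\cup\partial_\sboxt S$ is finite, eventually never returns. So its ``excursion'' outside $S\cup\partial_\sboxt S$ has no second endpoint in $\partial_\sboxt S$, and the shortcut mechanism of \Lr{finite} --- which replaces a finite detour between two returns to the unique large cluster of a good component $F$ --- simply does not apply. You cannot ``iteratively propagate $\omega'$-openness along $P$'s tail'' this way, and no contradiction with $|C_o(\omega')|<\infty$ is reached. Similarly, the dismissal of $u$ lying in a finite component $F_i$ of $\mathbb{L}^d\setminus C_o(\omega')$ is too quick: the tail can perfectly well pass from $F_i$ to $U_\infty$ by going through $C_o(\omega')$ (distinct components of $\mathbb{L}^d\setminus C_o(\omega')$ are not $\mathbb{L}^d$-nonadjacent once you allow the separating set in between), so ``impossible for distinct components'' is not a contradiction on its own. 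Both cases can be closed cleanly by invoking \eqref{S omega}: every $v_n$ with $n>t^*$ lies in the infinite component of $\mathbb{L}^d\setminus S$, hence by \eqref{S omega} lies in neither $C_o(\omega')$ nor $\partial^V C_o(\omega')$. That immediately forbids $u=v_{t^*}\in C_o(\omega')$ (since $v_{t^*+1}$ would then be in $\partial^V C_o(\omega')$) and shows the tail never touches $C_o(\omega')$, so a first entry into $U_\infty$ at index $m>t^*$ forces $v_{m-1}$ and $v_m$ to be adjacent vertices of two distinct components of $\mathbb{L}^d\setminus C_o(\omega')$, which \emph{is} impossible. With that fix your proposal becomes a correct alternative proof of the weaker finiteness statement, but you should still be aware that the paper needs the stronger cut statement.
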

\begin{proof}
We will prove the following slightly stronger statement: whenever a separating component $S$ occurs in a configuration $\omega$, a minimal (finite) edge cut of closed edges occurs in $\omega$ which separates $o$ from infinity and lies in $S\cup \partial_\sboxt S$. 

For this, consider a witness $\omega'$ of the occurrence of $S$, and let $\omega''$ be the configuration which coincides with $\omega$ (and $\omega'$) on every edge lying in $S\cup\partial_\sboxt S$, and every other edge of $\omega''$ is open.  Note that $S$ occurs in $\omega''$ since it occurs in $\omega$. Thus $C_o(\omega'')$ contains no vertex of $R_o(\omega')$ by \Lr{finite}. This implies that $C_o(\omega'')$ contains no vertex in the infinite component $X$ of $N\Ls\setminus S$, because any path $P$ in $\mathbb{L}$ connecting $o$ to $X$ has to first visit $R_o(\omega')$. To see that the latter statement is true, consider the last vertex $u$ of $\partial^V C_o(\omega')$ that $P$ contains. Notice that the subpath of $P$ after $u$, which is denoted $Q$, visits only vertices of the infinite component of $\mathbb{L}^d\setminus C_o(\omega')$, and furthermore that $u$ lies either in $S$ or in a finite component of $\mathbb{L}^d\setminus S$ by \eqref{S omega}. In the first case, $u$ lies in $R_o(\omega')$. In the second case, $Q$ has to visit $S$, hence $R_o(\omega')$.

We have just proved that $C_o(\omega'')$ can only contain vertices in $S$ and the finite components of $N \Ls\setminus S$. Since $S$ is a finite set of boxes, $C_o(\omega'')$ is finite as well. Hence a minimal edge cut of closed edges separating $o$ from infinity occurs in $\omega''$. This minimal edge cut must lie entirely in $S\cup \partial_\sboxt S$, because all edges not in $S\cup \partial_\sboxt S$ are open. This is the desired minimal edge cut since it occurs in $\omega$ as well. We will %call it the \defi{bad edge cut} of the separating component $\mathcal{S}$ of $C_o(\omega)$, and we will 
denote it by $\partial^b \mathcal{S}_o$.
\end{proof}

Lemmas~\ref{an S occurs} and~\ref{Co finite} combined allow us to express the event that $C_o$ is finite in terms of the event that some separating component occurs. To do so, let us write $D_N$ to denote the event $\{diam(C_o)<N/5\}$.
Thus we have proved that  
\begin{align}\label{cases}
\begin{split}
1-\theta(p)=&\Pr_p(C_o \text{ is finite})\\ = &\Pr_p(D_N)+\Pr_p(|C_o|<\infty, D^\mathsf{c}_N)\\ =&\Pr_p(D_N)+\Pr_p(\text{some separating component occurs},D^\mathsf{c}_N).
\end{split}
\end{align}
Here and below, the notation $X,Y,\ldots$ denotes the intersection of the events $X,Y,\ldots$. 

\subsection{Expanding $\theta$ as an infinite sum of polynomials} \label{expand}

Notice that $\Pr_p(D_N)$ is a polynomial in  $p$, since the event $D_N$ depends only on the state of finitely many edges. 

Following our technique from \cite{analyticity} we will now use the inclusion-exclusion principle to expand the right-hand side $\Pr_p(\text{some  separating component occurs},D^\mathsf{c}_N)$  of \eqref{cases} as an infinite sum of polynomials, corresponding to all possible separating components that could occur.

Notice that any two occurring separating components are disjoint because they are connected, their boxes are bad, and they are surrounded by good boxes by definition.

\begin{lemma}\label{inc-ex lemma}
For every $p>p_c$ there is some integer $N=N(p)>0$ and an interval $(a,b)$ containing $p$ such that the expansion
\begin{align}\label{inc-ex}
\Pr_q(\text{some S occurs},D^\mathsf{c}_N)=\sum_{S\in MS^N} (-1)^{c(S)+1} \Pr_q(\text{S occurs},D^\mathsf{c}_N)
\end{align}
holds for every $q\in (a,b)\cap(p_c,1]$, where $MS^N$ denotes the set of all finite collections of pairwise disjoint separating components $S$, and $c(S)$ denotes the number of separating components of $S$. 
\end{lemma}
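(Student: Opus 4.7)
The plan is to combine the countable Bonferroni/inclusion-exclusion formula with the fact, recorded just before the lemma, that any two separating components that occur simultaneously must be disjoint. First I would enumerate all separating components as $S_1,S_2,\ldots$ (countably many, since each is a finite subgraph of $N\mathbb{L}^d_\sboxt$) and set $A_i=\{S_i\text{ occurs}\}\cap D^\mathsf{c}_N$. For each $n\in\N$ we have the pointwise identity
$$\one_{\bigcup_{i=1}^n A_i}=\sum_{\emptyset\neq J\subseteq[n]}(-1)^{|J|+1}\,\one_{\bigcap_{j\in J}A_j},$$
and $\bigcap_{j\in J}A_j=\emptyset$ unless $\{S_j:j\in J\}\in MS^N$. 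Taking expectations, letting $n\to\infty$, and using monotone convergence on the left, the desired expansion \eqref{inc-ex} will follow as soon as the right-hand side converges absolutely, which will let us interchange limit and sum via dominated convergence.

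The bulk of the work is therefore to prove
$$\sum_{S\in MS^N}\Pr_q(\text{$S$ occurs})<\infty$$
uniformly for $q$ in a neighbourhood of $p$. For any $S=\{T_1,\ldots,T_k\}\in MS^N$, condition \ref{occ ii} of ``occurs'' forces every $T_i$ to be surrounded in $N\mathbb{L}^d_\sboxt$ by good boxes, so no box of $T_i$ is $N\mathbb{L}^d_\sboxt$-adjacent to any box of $T_j$ for $j\neq i$ (otherwise that box would simultaneously be good and bad). Consequently the $\mathbb{L}^d$-edge sets inside distinct $T_i$'s are disjoint, and the events $\{\text{every box of $T_i$ is bad}\}$ are mutually independent. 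Since the occurrence of $T_i$ implies that all its boxes are bad,
$$\Pr_q(\text{$S$ occurs})\leq \prod_{i=1}^k \Pr_q(\text{every box of $T_i$ is bad}).$$
Summing over ordered tuples and dividing by $k!$ gives
$$\sum_{S\in MS^N}\Pr_q(\text{$S$ occurs})\;\leq\;\sum_{k\geq 1}\frac{1}{k!}\Big(\sum_T\Pr_q(\text{$T$ is bad})\Big)^{k}\;=\;\exp\!\Big(\sum_T\Pr_q(\text{$T$ is bad})\Big)-1,$$
where $T$ ranges over all separating components.

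To finish I would invoke \Lr{exp dec} (the same estimate already used in the proof of \Lr{an S occurs}): for some $N=N(p)$ and some interval $(a,b)\ni p$, there exists $t>0$ such that $\sum_{T:\,|T|=n}\Pr_q(\text{$T$ is bad})\leq e^{-tn}$ uniformly for $q\in(a,b)\cap(p_c,1]$. Summing over $n$ bounds $\sum_T\Pr_q(\text{$T$ is bad})$ by a finite constant $M=M(p)$, so the total over $MS^N$ is at most $e^M-1<\infty$, giving the required absolute convergence. The one genuine obstacle is this uniform exponential decay; once \Lr{exp dec} is in hand, the rest of the argument is a clean measure-theoretic passage to the limit that exploits disjointness to restrict the inclusion-exclusion sum to $MS^N$.
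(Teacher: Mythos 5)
Your proof is correct, and it reaches the same conclusion as the paper's, but via a partially rederived and somewhat heavier route. The key point of comparison is how absolute convergence of $\sum_{S\in MS^N}\Pr_q(\text{S occurs},D^\mathsf{c}_N)$ is obtained. The paper's \Lr{exp dec} already gives this directly: $MS^N_n$ as defined there consists of \emph{collections} of pairwise disjoint separating components of total size $n$, not just single components, so $\sum_{S\in MS^N}\Pr_q(\text{S is bad})=\sum_{n\ge1}\sum_{S\in MS^N_n}\Pr_q(\text{S is bad})\le\sum_{n\ge1}e^{-tn}<\infty$, and the (dominating) bound $\Pr_q(\text{S occurs},D^\mathsf{c}_N)\le\Pr_q(\text{S is bad})$ finishes it. You instead invoke only the single-component ($k=1$) case of \Lr{exp dec}, and then rebuild the multi-component bound via an independence argument (using that occurring components must have all boxes at $N\mathbb{L}^d_\sboxt$-distance at least~$2$, hence disjoint edge sets in $\mathbb{L}^d$, hence independence) combined with the ordered-tuples/$k!$ trick to get $e^M-1$. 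That argument is valid --- you do correctly observe that for collections whose components are merely disjoint but adjacent, the probability vanishes, so independence applies wherever the probability is nonzero --- but it duplicates work already packaged inside the proof of \Lr{exp dec}, which finds a subset of pairwise disjoint boxes across the whole collection and counts collections via partitions. Your passage to the limit (Bonferroni truncation plus monotone/dominated convergence) is also slightly different in flavour from the paper's, which instead uses Borel--Cantelli to conclude that a.s.\ only finitely many separating components occur, applies the \emph{pointwise} inclusion-exclusion identity for indicators, and then invokes Fubini; the two routes are equivalent once absolute convergence is in hand. In short: correct, but you could simply quote \Lr{exp dec} at full strength rather than only its single-component case.
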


\Lr{inc-ex lemma} will follow easily from the next lemma. We will use the notation $MS^N_n$ to denote the set of those finite collections of pairwise disjoint separating components $\{S_1,S_2,\ldots,S_k\}$ such that $|S_1|+|S_2|+\ldots+|S_k|=n$. The superscript reminds us of the dependence of the boxes on $N$. 

\begin{lemma}\label{exp dec}
For every $p>p_c$, there are $N=N(p)>0$, $t=t(p)>0$ and an interval $(a,b)$ containing $p$ such that 
\begin{align}\label{ineq}
\sum_{S\in MS^N_n} \Pr_q(\text{S is bad})\leq e^{-tn}
\end{align}
for every $n\geq 1$ and every $q\in (a,b)\cap (p_c,1]$. 
\end{lemma}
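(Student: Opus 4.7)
The plan is a Peierls-style union bound. Fix $\varepsilon>0$, to be chosen later. By~\eqref{good to 1}, for our fixed $p>p_c$ we may choose $N=N(p)$ with $\Pr_p(B(o)\text{ is bad})<\varepsilon/2$. For this fixed $N$, the event ``$B(o)$ is bad'' depends on only finitely many edges, so $q\mapsto \Pr_q(B(o)\text{ is bad})$ is a polynomial and therefore continuous; hence there is an open interval $(a,b)\ni p$ with $\Pr_q(B(o)\text{ is bad})<\varepsilon$ for every $q\in (a,b)\cap (p_c,1]$. This will be the interval of the lemma.

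For each $S=\{S_1,\dots,S_k\}\in MS^N_n$, I control $\Pr_q(S\text{ is bad})$ by extracting a sparse independent subfamily of boxes. The badness of $B(x)$ is measurable with respect to edges having both endpoints in $B(x)$, so two badness events are independent whenever the corresponding boxes have disjoint vertex sets, i.e., whenever their centres lie at $\|\cdot\|_\infty$-distance $\geq 2$ in $\Z^d$. Since every closed $\|\cdot\|_\infty$-ball of radius $1$ in $\Z^d$ contains $3^d$ lattice points, a greedy selection inside $\bigcup_i S_i$ (a set of $n$ boxes) produces a subfamily of size $\geq n/3^d$ whose badness events are mutually independent, giving
$$\Pr_q(S\text{ is bad})\leq \varepsilon^{n/3^d}.$$

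It remains to show $|MS^N_n|\leq C_2^n$ for some $C_2=C_2(d)$. I anchor each $S_i$ at the first box hit by the positive $x_1$-ray from $o$; this is well-defined because $S_i$ separates $o$ from infinity, and the anchors are distinct because the $S_i$'s are disjoint and nested around $o$. The isoperimetric inequality in $\Z^d$ bounds the diameter of the region enclosed by $S_i$ by $O(|S_i|^{1/(d-1)})\leq O(n)$, so every anchor lies among $O(n)$ positions on the ray. The classical lattice-animal bound for the bounded-degree graph $N\mathbb{L}^d_\sboxt$ yields $\leq C_1^{n_i}$ connected subgraphs of size $n_i$ through a fixed vertex, so summing over the $\leq 2^{O(n)}$ anchor sequences and the $2^{n-1}$ compositions $n=n_1+\cdots+n_k$ gives $|MS^N_n|\leq C_2^n$.

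Putting the two estimates together,
$$\sum_{S\in MS^N_n}\Pr_q(S\text{ is bad})\leq \bigl(C_2\,\varepsilon^{1/3^d}\bigr)^n.$$
Returning to the choice of $\varepsilon$ and making it small enough that $C_2\,\varepsilon^{1/3^d}\leq e^{-t}$ for some $t=t(p)>0$ (and then picking $N$ and $(a,b)$ accordingly) yields~\eqref{ineq}. The main obstacle is the counting step, specifically the claim that the anchors of nested separating components of total size $n$ are confined to an $O(n)$-initial segment of the ray: this relies on the isoperimetric inequality in $\Z^d$ to control the diameter of the enclosed region in terms of the size of the surrounding component. The rest is a routine Peierls estimate exploiting that bad boxes are very rare for large $N$.
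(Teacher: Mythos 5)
Your plan mirrors the paper's proof essentially step for step: a Peierls-style extraction of a sparse independent subfamily of pairwise-disjoint boxes (giving $\Pr_q(S\text{ is bad})\leq \varepsilon^{n/3^d}$), an anchoring of each component on the positive $x_1$-ray to bound $|MS^N_n|\leq C_2^n$ via a lattice-animal count, and a continuity argument (badness of a box is a polynomial in $q$) to upgrade from $q=p$ to an interval $(a,b)$. The differences in bookkeeping (compositions of $n$ rather than partitions with Hardy--Ramanujan; the explicit constant $3^d$ instead of an unspecified $k(d)$) are cosmetic.

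The one genuine error is in the step you flag as the main obstacle. You claim the isoperimetric inequality in $\Z^d$ bounds the diameter of the region enclosed by $S_i$ by $O(|S_i|^{1/(d-1)})$. The isoperimetric inequality bounds the \emph{volume} of the enclosed region, not its diameter, and the claimed exponent is simply wrong: a thin tube of unit width running along an initial segment of $X^+$ of length $L$ is a connected separating set of size $\Theta(L)$ whose enclosed region has diameter $\Theta(L)$, so the true worst-case growth of diameter in boundary size is linear. The correct justification is the paper's, and is elementary: since $S_i$ is connected and separates $o$ from infinity, it contains a box on $X^+$ and a box on $X^-$, and any path inside $S_i$ joining them has at most $|S_i|$ boxes, which already places the anchor among the first $|S_i|\leq n$ boxes of $X^+$. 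No isoperimetry is needed. Because your final counting only used the weaker conclusion ``anchors lie among $O(n)$ positions,'' the overall estimate $|MS^N_n|\leq C_2^n$ survives, but as written the intermediate claim is false and the appeal to isoperimetry should be replaced by this connectedness argument.
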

\begin{proof}
To prove the desired exponential decay we will use a standard renormalization technique  with a few modifications. We will first prove the exponential decay when $q=p$, and then we will use a continuity argument to obtain the desired assertion.

We will first show that there exists a constant $k>0$ depending only on $d$ such that for every $S\in MS^N_n$ we have $$\Pr_p(\text{S is bad})\leq c^{n/k},$$ where $c:=\Pr_p(B(o) \text{ is bad})$. Indeed, it is not hard to see that there is a constant $k=k(d)>0$ such that for every $S\in MS^N_n$ there is a subset $Y$ of $S$ of size at least $n/k$, all boxes of which are pairwise disjoint. As each box of $Y$ is bad whenever $S$ occurs, we have $$\Pr_p(\text{S is bad})\leq \Pr_p(\text{Y is bad}).$$
By independence $\Pr_p(\text{Y is bad})=c^{n/k}$ and the assertion follows.

We will now find an exponential upper bound for the number of elements of $S\in MS^N_n$. Since $N\Ls$ is isomorphic to $\Ls$, there is a constant $\mu>0$ depending only on $d$ and not on $N$, such that the number of connected subgraphs of $N\Ls$ with $n$ vertices containing a given vertex is at most $\mu^n$. However, an element of $MS^N_n$ might contain multiple separating components, and there are in general several possibilities for the reference vertices that each of them contains. To remedy this, consider one of the $d$ axis $X=(-x_1,x_0=B(o),x_1)$ of $N\Ls$ that contain the box $B(o)$, and let $X^+$, $X^-$ be its two infinite subpaths starting from $B(o)$. We will first show that any separating component of size $n$ contains one of the first $n$ elements of $X^+$. Indeed, consider an occurring separating component $S$ of size $n$, and notice that $S$ has to contain some vertex $x^+$ of $X^+$, and some vertex $x^-$ of $X^-$. The graph distance between $x^+$ and $x^-$ is at most $n$, as there is a path in $S$ connecting them. This implies that $x^+$ is one of the first $n$ elements of $X^+$, as desired.

Consider now a constant $M>0$ such that $m\mu^m\leq M^m$ for every integer $m\geq 1$. Consider also a partition $\{m_1,m_2,\ldots,m_k\}$ of $n$. It follows that the number of collections $\{S_1,S_2,\ldots,S_k\}$ with $|S_i|=m_i$ is at most $m_1m_2\ldots m_k \mu^n\leq M^n$, since we have at most $m_i\mu^{m_i}$ choices for each $S_i$. A well known result of Hardy \& Ramanujan \cite{HarRam} implies that the number of partitions of $n$ is at most $r^{\sqrt{n}}$ for some constant $r>0$. We can now deduce that the size of $MS^N_n$ is at most $r^{\sqrt{n}}M^n$, implying that 
$$\sum_{S\in MS^N_n} \Pr_p(\text{S is bad})\leq r^{\sqrt{n}}M^n c^{n/k}.$$
Notice that in the right hand side of the above inequality only $c$ depends on $N$. It is a standard result that $c$ converges to $0$ as $N$ tends to infinity \cite[Theorem 7.61]{Grimmett}. Choosing $N$ large enough so that $Mc^{1/k}<1$, we obtain the desired exponential decay.

Now notice that $c(q)=\Pr_q(B(o) \text{ is bad})$ is a polynomial in $q$, hence a continuous function, since it depends only on the state of the edges inside $B(o)$. This implies that we can choose an interval $(a,b)$ containing $p$ such that $Mc(q)^{1/k}<1$ for every $q\in (a,b)\cap (p_c,1]$. This completes the proof.
\end{proof}

\Lr{inc-ex lemma} follows now easily:

\begin{proof}[Proof of \Lr{inc-ex lemma}]
\Lr{exp dec} shows that $\sum_{S\in MS^N} \Pr_q(\text{S occurs}, D^\mathsf{c}_N)$ is finite, hence only finitely many separating components occur in almost any percolation configuration $\omega$ by the Borel-Cantelli lemma. Now the standard inclusion-exclusion principle implies that $$\mathbbm{1}_{\{\text{S occurs},D^\mathsf{c}_N\}}=\sum_{S\in MS^N}(-1)^{c(S)+1} \mathbbm{1}_{\{\text{S occurs},D^\mathsf{c}_N\}}.$$ Taking expectations we obtain 
$$\Pr_q(\text{S occurs}, D^\mathsf{c}_N)=\Ex_q(\sum_{S\in MS^N}(-1)^{c(S)+1} \mathbbm{1}_{\{\text{S occurs},D^\mathsf{c}_N\}}).$$
Since $$\Ex_q(\sum_{S\in MS^N} \mathbbm{1}_{\{\text{S occurs},D^\mathsf{c}_N\}})=\sum_{S\in MS^N} \Pr_q(\text{S occurs}, D^\mathsf{c}_N)$$ and the latter sum is finite, Fubini's theorem implies that 
$$\Ex_q(\sum_{S\in MS^N}(-1)^{c(S)+1} \mathbbm{1}_{\{\text{S occurs},D^\mathsf{c}_N\}})=\sum_{S\in MS^N}(-1)^{c(S)+1} \Pr_q(\text{S occurs},D^\mathsf{c}_N).$$ 
The proof is complete.
\end{proof}

We are now ready to prove \Tr{analytic}.

\begin{proof}[Proof of \Tr{analytic}]
Consider some $p\in (p_c,1]$. Let $N,t>0$, and the interval $(a,b)$ containing $p$, be as in \Lr{exp dec}. Then the expression $$1-\theta(q)=\Pr_q(D_N)+\sum_{n=1}^\infty \sum_{S\in MS^N_n} (-1)^{c(S)+1} \Pr_q(\text{S occurs},D^\mathsf{c}_N)$$ holds for every $q\in (a,b)\cap (p_c,1]$, and furthermore $$\Bigl\lvert\sum_{S\in MS^N_n} (-1)^{c(S)+1} \Pr_q(\text{S occurs},D^\mathsf{c}_N)\Bigr\rvert\leq e^{-tn}$$ for every $q\in (a,b)\cap (p_c,1]$.
The probability $\Pr_q(D_N)$ is a polynomial in $q$, hence analytic, because it depends on finitely many edges. Moreover, the event $\{\text{S occurs},D^\mathsf{c}_N\}$ depends only on the state of the edges lying in $S\cup\partial_\sboxt S$ and the box $B(o,N)$. The number of edges of each box is $O(N^d)$, hence the event $\{\text{S occurs},D^\mathsf{c}_N\}$ depends only on $O(N^d n)$ edges. The desired assertion follows now from \Tr{cor general}.
\end{proof}

\subsection{Exponential tail of $\partial^b \mathcal{S}_o$} \label{Pete} 

\Lr{exp dec} easily implies that the size of $\partial^b \mathcal{S}_o$, as defined in the proof of \Lr{Co finite}, has an exponential tail:

\begin{theorem}\label{bad boundary}
For every $p>p_c$, there are constants $N=N(p)>0$ and $t=t(p)>0$ such that 
$$\Pr_p(|\partial^b \mathcal{S}_o|\geq n)\leq e^{-tn}$$ for every $n\geq 1$.
\end{theorem}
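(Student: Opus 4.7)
The plan is to reduce the tail estimate on $|\partial^b \mathcal{S}_o|$ to the exponential decay of $|\mathcal{S}_o|$ already captured by \Lr{exp dec}. By its construction in the proof of \Lr{Co finite}, the edge cut $\partial^b \mathcal{S}_o$ consists of edges of $\mathbb{L}^d$ lying inside the boxes of $\mathcal{S}_o\cup\partial_\sboxt \mathcal{S}_o$, so a large edge cut forces a large separating component $\mathcal{S}_o$.

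First I would establish the deterministic inequality
$$|\partial^b \mathcal{S}_o|\leq K\,|\mathcal{S}_o|,$$
where $K=K(d,N)$ depends only on $d$ and $N$. Indeed, each box contains $O(N^d)$ edges of $\mathbb{L}^d$, and each vertex of $N\Ls$ has exactly $3^d-1$ neighbours, which yields $|\partial_\sboxt \mathcal{S}_o|\leq (3^d-1)\,|\mathcal{S}_o|$ by the standard double-counting argument. Consequently,
$$\{|\partial^b \mathcal{S}_o|\geq n\}\subseteq \{|\mathcal{S}_o|\geq n/K\},$$
with the convention that $\partial^b \mathcal{S}_o$ is defined only when $C_o$ is finite and $diam(C_o)\geq N/5$, so both events above are empty otherwise and the claim is trivial in that case.

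Next I would observe that $\mathcal{S}_o$, as constructed in the proof of \Lr{an S occurs}, is itself a separating component whose boxes are all bad. Hence, for each integer $m\geq 1$,
$$\Pr_p(|\mathcal{S}_o|=m)\leq \sum_{\substack{S\text{ separating}\\ |S|=m}}\Pr_p(S\text{ is bad})\leq \sum_{S\in MS^N_m}\Pr_p(S\text{ is bad})\leq e^{-t'm},$$
for a suitable $N=N(p)>0$ and $t'=t'(p)>0$ supplied by \Lr{exp dec} (since any single separating component of size $m$ is itself an element of $MS^N_m$). Summing this geometric tail over $m\geq \lceil n/K\rceil$ yields
$$\Pr_p(|\partial^b \mathcal{S}_o|\geq n)\leq \sum_{m\geq n/K}e^{-t'm}\leq \frac{e^{-t'n/K}}{1-e^{-t'}},$$
and absorbing the constant prefactor into the exponent (at the cost of a slightly smaller $t<t'/K$, together with an adjustment of the inequality at finitely many small values of $n$) gives the claim for some $t=t(p)>0$ and all $n\geq 1$.

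There is no genuine obstacle here: all the substantive renormalisation work is carried out by \Lr{exp dec}. What remains is the easy bookkeeping translating edge counts in $\mathbb{L}^d$ into box counts in $N\Ls$, together with the observation that $\mathcal{S}_o$ is itself a single entirely bad separating component, so that the sum bound of \Lr{exp dec} applies directly.
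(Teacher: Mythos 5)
Your proof is correct and follows essentially the same route as the paper's: you bound $|\partial^b \mathcal{S}_o|$ deterministically by $K|\mathcal{S}_o|$ using the $O(N^d)$ edges per box together with $|\partial_\sboxt \mathcal{S}_o|\leq (3^d-1)|\mathcal{S}_o|$, and then invoke Lemma~\ref{exp dec}. The only difference is cosmetic: you spell out the final geometric summation over $m\geq n/K$, whereas the paper leaves that step implicit with ``the desired assertion follows from Lemma~\ref{exp dec}.''
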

\begin{proof}
Assume that $|\partial^b \mathcal{S}_o|\geq n$, and consider the separating component $S$ associated to $C_o$. Then the boxes of $S\cup \partial_\sboxt S$ must contain at least $n$ edges. Hence the number of boxes of $S\cup \partial_\sboxt S$ is at least $cn/N^d$ for some constant $c>0$. Moreover, we have $|\partial_\sboxt S|\leq (3^d-1)|S|$, because each box of $\partial_\sboxt S$ has at least one neighbour in $S$, and each box in $S$ has at most $3^d-1$ neighbours. Therefore, $S$ contains at least $cn/(3N)^d$ boxes. The desired assertion follows from \Lr{exp dec}.
\end{proof}

We recall that for every $p\in (p_c,1-p_c]$, the probability $\Pr_p(|\partial C_o|\geq n)$ does not decay exponentially in $n$ \cite{KeZhaPro,ExpGrowth}. This implies that for those values of $p$, $\partial^b \mathcal{S}_o$ has typically smaller order of magnitude than the standard minimal edge cut of $C_o$.

\medskip
As a corollary, we re-obtain a result of Pete \cite{Pete} which states that when $C_o$ is finite, the number of touching edges between $C_o$ and the unique infinite cluster, which we denote $C_{\infty}$, has an exponential tail. A \defi{touching edge} is an edge in $\partial^E C_o\cap \partial^E C_{\infty}$. We denote the number of (closed) touching edges joining $C_o$ to the infinite component $C_{\infty}$ by $\phi(C_o,C_{\infty})$.

\begin{corollary}\label{touching}
For every $p>p_c$, there is some $c=c(p,d)>0$ such that
$$\Pr_p(|C_o|<\infty,\phi(C_o,C_{\infty})\geq t)\leq e^{-ct}$$
for every $t\geq 1$.
\end{corollary}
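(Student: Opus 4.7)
The plan is to reduce the corollary to \Tr{bad boundary} by showing that every touching edge between $C_o$ and $C_\infty$ already lies in the minimal edge cut $\partial^b \mathcal{S}_o$ whenever the latter is defined, so that $\phi(C_o,C_\infty)\leq |\partial^b \mathcal{S}_o|$. I would fix $p>p_c$, let $N=N(p)$ and $t'=t(p)>0$ be as in \Tr{bad boundary}, and set $T_0:=2d\lceil N/5\rceil^d$, a crude upper bound on the number of edges of $\mathbb{L}^d$ incident to any cluster of diameter less than $N/5$.

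For the key step, I would consider any configuration $\omega$ with $|C_o(\omega)|<\infty$ and $diam(C_o(\omega))\geq N/5$, so that the separating component $\mathcal{S}_o$ from \Lr{an S occurs} is defined and occurs. Revisiting the configuration $\omega''$ built in the proof of \Lr{Co finite}, which agrees with $\omega$ on $\mathcal{S}_o\cup \partial_\sboxt \mathcal{S}_o$ and is open everywhere else, recall that $C_o(\omega'')$ is finite and that $\partial^b \mathcal{S}_o\subseteq \mathcal{S}_o\cup \partial_\sboxt \mathcal{S}_o$ is its minimal edge cut. Given a touching edge $e=uv$ of $\omega$, with $u\in C_o(\omega)$ and $v\in C_\infty(\omega)$, the monotone coupling $\omega''\geq \omega$ yields $u\in C_o(\omega'')$ and places $v$ in some infinite cluster of $\omega''$, whence $v\notin C_o(\omega'')$. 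If $e$ were outside $\mathcal{S}_o\cup\partial_\sboxt \mathcal{S}_o$ it would be open in $\omega''$ and force $u,v$ into the same cluster of $\omega''$, a contradiction; hence $e$ is closed in $\omega''$ and joins $C_o(\omega'')$ to an infinite component of $\mathbb{L}^d\setminus C_o(\omega'')$, giving $e\in \partial^b \mathcal{S}_o$ as claimed.

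To finish, I would split according to the diameter of $C_o$:
\[
\Pr_p(|C_o|<\infty,\phi\geq t)\leq \Pr_p(|\partial^b \mathcal{S}_o|\geq t)+\Pr_p\bigl(|C_o|<\infty,\, diam(C_o)<N/5,\, \phi\geq t\bigr).
\]
The first term is at most $e^{-t't}$ by \Tr{bad boundary}; the second vanishes for $t>T_0$ (since then $\phi\leq T_0$ in that regime) and is bounded by $\alpha:=\Pr_p(|C_o|<\infty)<1$ for $t\leq T_0$. Choosing $c:=\min\{t',\,-\log(\alpha)/T_0\}>0$ then yields the claimed bound $e^{-ct}$ for every $t\geq 1$. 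I expect the domination $\phi(C_o,C_\infty)\leq |\partial^b \mathcal{S}_o|$ to be the only non-routine ingredient; it rests entirely on the monotone coupling used in \Lr{Co finite}, the remainder being bookkeeping on top of \Tr{bad boundary}.
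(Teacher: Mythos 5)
Your proof is correct and follows essentially the same route as the paper: reduce to Theorem~\ref{bad boundary} by observing that every touching edge lies in $\partial^b\mathcal{S}_o$. The paper justifies this containment in one line by noting that $C_\infty$ lies in the unbounded component of $\mathbb{L}^d\setminus\partial^b\mathcal{S}_o$ while $C_o$ lies in a bounded one; your monotone-coupling argument via $\omega''$ reaches the same conclusion a bit more laboriously, and your explicit treatment of the $diam(C_o)<N/5$ regime fills in bookkeeping the paper leaves implicit.
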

\begin{proof}
The result follows from \Tr{bad boundary} by observing that $C_{\infty}$ has to lie in the unbounded component of $\mathbb{L}^d\setminus \partial^b \mathcal{S}_o$, hence all relevant edges belong to $\partial^b \mathcal{S}_o$.
\end{proof}
 
\section{Analyticity of $\tau$}\label{section tau}

In the previous section we proved that $\theta$ is analytic above $p_c$ for every $d\geq 3$. Some further challenges arise when one tries to prove that other functions describing the macroscopic behaviour of our model are analytic functions of $p$. The main obstacle is that events of the form $\{x \text{ is connected to }y\}$ are not fully determined, in general, by the configuration inside $S\cup \partial_\sboxt S$. In this section we show how one can remedy this issue, and we will prove that the $k$-point function $\tau$ and its truncated version $\tau^f$ are analytic functions above $p_c$ for every $d\geq 3$. We will then deduce that the truncated susceptibility $\mathbb{E}(|C_o|; |C_o|<\infty)$ and the free energy $\mathbb{E}(|C_o|^{-1})$ are analytic functions as well.

Given a $k$-tuple $\vx=\{x_1,\ldots,x_k\}, k\geq 2$ of vertices of $\Z^d$, the function $\tau_{\vx}(p)$ denotes the probability that $\vx$ is contained in a cluster of Bernoulli percolation on $\Z^d$ with parameter $p$. Similarly, $\tau^f_{\vx}(p)$ denotes the probability that $\vx$ is contained in a {\it finite} cluster. We will write $MS^N(\vx)$ for the set of all finite collections of separating components surrounding some vertex of $\vx$, and $MS^N_n(\vx)$ for the set of those elements of $MS^N(\vx)$ that have size $n$.

Arguing as in the proof of \Lr{exp dec} we obtain the following: 
\begin{lemma}\label{exp dec x}
For every $p>p_c$, there are $N=N(p)>0$, $t=t(p)>0$, and an interval $(a,b)$ containing $p$, such that 
\begin{align}\label{ineq x}
\sum_{S\in MS^N_n} \Pr_q(\text{S occurs})\leq e^{-tn}
\end{align}
for every $n\geq 1$ and every $q\in (a,b)\cap (p_c,1]$. 
\end{lemma}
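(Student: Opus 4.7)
The plan is to adapt the proof of \Lr{exp dec} to the present setting, the only genuinely new feature being that separating components may now surround any of the $k$ vertices of $\vx$ rather than the single vertex $o$. The per-collection probability bound $\Pr_q(S\text{ is bad})\le c(q)^{n/k_0}$ from \Lr{exp dec}, with $c(q):=\Pr_q(B(o)\text{ is bad})$ and $k_0=k_0(d)$ a dimension-dependent constant, carries over verbatim, since it rests only on the independence of bad events on disjoint boxes together with a geometric packing argument inside each separating component. Moreover, since ``$S$ occurs'' enforces in particular condition \ref{occ i} that every box of $S$ is bad, this same estimate bounds $\Pr_q(S\text{ occurs})$.

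The step that does require modification is the enumeration of collections. In \Lr{exp dec}, a single separating component of size $m$ around $o$ is pinned down by picking one of its first $m$ boxes along a fixed axis through $B(o)$, giving at most $m\mu^m$ choices, where $\mu=\mu(d)$ bounds the number of $N\Ls$-connected subgraphs of size $m$ containing a given box. In the present setting each component of a collection in $MS^N_n(\vx)$ surrounds some $x_i\in\vx$, so a union bound over the $k$ base points raises this count to at most $km\mu^m$. Plugging this into the partition argument and the Hardy--Ramanujan estimate $r^{\sqrt n}$ on the number of partitions of $n$ in exactly the same manner as in \Lr{exp dec} yields
\[
\sum_{S\in MS^N_n(\vx)}\Pr_q(S\text{ occurs}) \leq r^{\sqrt n}(kM)^n c(q)^{n/k_0},
\]
where $M$ is chosen so that $m\mu^m\le M^m$ for every $m\ge 1$.

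To finish, I would use the standard fact that $c(p)\to 0$ as $N\to\infty$ to choose $N=N(p)$ so large that $kMc(p)^{1/k_0}<1$, and then exploit the continuity of the polynomial $q\mapsto c(q)$ to pass to an open interval $(a,b)\ni p$ on which the strict inequality $kMc(q)^{1/k_0}<1$ persists; this yields the desired exponential decay uniformly on $(a,b)\cap(p_c,1]$. I do not anticipate any real obstacle: the only new ingredient compared to \Lr{exp dec} is the multiplicative factor $k^n$ coming from the $k$ base points, which is absorbed harmlessly into the choice of $N$ because $k$ is fixed once $\vx$ is fixed.
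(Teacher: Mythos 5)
Your proposal is correct and matches the paper's intent: the paper gives no separate proof of this lemma, stating only that it follows by ``arguing as in the proof of Lemma~\ref{exp dec}'', which is precisely what you do. The single genuinely new ingredient---anchoring each separating component to one of the $k$ base points of $\vx$ rather than just to $B(o)$, incurring a harmless factor $k^n$ absorbed by the choice of $N$---is exactly the right adaptation, and your observation that the occurrence of $S$ implies $S$ is bad (condition~\ref{occ i}) correctly lets the probability bound from Lemma~\ref{exp dec} carry over verbatim. (Note the lemma as printed writes $MS^N_n$ where it should presumably read $MS^N_n(\vx)$; you interpreted this the intended way.)
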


We are now ready to prove that $\tau$ and $\tau^f$ are analytic. 

\begin{theorem}\label{tau}
For every $d\geq 3$ and every finite set $\vx$ of vertices of $\Z^d$, the functions $\tau_{\vx}(p)$ and $\tau^f_{\vx}(p)$ admit analytic extensions to a domain of $\C$ that contains the interval $(p_c, 1]$.

Moreover, for every $p\in (p_c, 1]$ and every finite set $\vx$ such that $diam(\vx)\geq N/5$, there is a closed disk $D(p,\delta),\delta>0$ and positive constants $c_1=c_1(p,\delta),c_2=c_2(p,\delta)$ such that $$|\tau^f_{\vx}(z)|\leq c_1 e^{-c_2{diam(\vx)}}$$
\fe\ $z\in D(p,\delta)$ for such an analytic extension $\tau^f_{\vx}(z)$ of $\tau^f_{\vx}(p)$.
\end{theorem}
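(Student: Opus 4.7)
The plan is to mimic the strategy used for $\theta$ in \Sr{theta}, the main additional difficulty being the one already flagged at the start of this section: the event $\{\vx \subset C(x_1)\}$ is \emph{not} measurable with respect to the state of the edges inside $S \cup \partial_\sboxt S$. Fix any $x_1 \in \vx$ and decompose
\[
\tau^f_{\vx}(p) = \Pr_p\big(\vx \subset C(x_1),\, diam(C(x_1)) < N/5\big) + \Pr_p\big(\vx \subset C(x_1),\, |C(x_1)|<\infty,\, diam(C(x_1)) \geq N/5\big).
\]
The first summand depends on finitely many edges, hence is a polynomial. For the second, a direct analogue of \Lr{an S occurs} shows that a separating component occurs whenever that event is satisfied; by \Lr{exp dec x} only finitely many separating components occur almost surely, so the inclusion-exclusion argument of \Lr{inc-ex lemma} rewrites the second summand as
\[
\sum_{S \in MS^N(\vx)} (-1)^{c(S)+1}\, \Pr_q\big(S \text{ occurs},\ \vx \subset C(x_1)\big).
\]
The crucial point is that, once $S$ is fixed, the joint event $\{S \text{ occurs},\ \vx \subset C(x_1)\}$ is determined by the (finitely many) edges inside $S \cup \partial_\sboxt S$ \emph{together with} those inside the finite components of $N\Ls \setminus S$, i.e.\ by the entire region enclosed by $S$. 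Thus each summand is a polynomial in $p$, extends entirely to $\C$, and has complexity $O(N^d |S|)$; combined with the exponential decay of \Lr{exp dec x}, \Tr{cor general} yields analyticity of $\tau^f_{\vx}$ on a neighbourhood of $p$ inside $(p_c,1]$, and varying $p$ covers the whole supercritical interval.

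For $\tau_{\vx}$ I would write $\tau_{\vx}(p) = \tau^f_{\vx}(p) + \Pr_p(\vx \subset C_\infty)$ using uniqueness of the infinite cluster, and then expand by inclusion-exclusion on the complementary events $\{x \notin C_\infty\}$:
\[
\Pr_p(\vx \subset C_\infty) = \sum_{A \subseteq \vx} (-1)^{|A|}\, \Pr_p\big(\text{every } x \in A \text{ lies in a finite cluster}\big).
\]
Each term on the right is analytic by a verbatim adaptation of the proof of \Tr{analytic}: replace the origin $\{o\}$ throughout \Sr{theta} by the set $A$, and use \Lr{exp dec x} in place of \Lr{exp dec}; the same inclusion-exclusion/Fubini argument applies.

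For the decay estimate, note that when $diam(\vx)\geq N/5$ the polynomial term in the decomposition of $\tau^f_{\vx}$ vanishes identically, since $\vx \subset C(x_1)$ forces $diam(C(x_1))\geq diam(\vx)\geq N/5$; moreover every separating component $S$ contributing a nonzero summand must enclose all of $\vx$ and therefore satisfies $|S|\geq c\, diam(\vx)/N$ for some dimensional constant $c$. Adapting \Lr{C equals S NN} to the present event—decomposing its indicator as a disjoint union over the finitely many compatible cluster shapes inside $S\cup\partial_\sboxt S$ and the enclosed finite components, and then applying the pointwise inequality $|p^m(1-p)^n|\leq (p+\delta)^m(1-p+\delta)^n\cdot c_{\delta,p}^n$ on $D(p,\delta)$—yields, for $\delta$ small enough,
\[
|\Pr_z(S\text{ occurs},\ \vx \subset C(x_1))|\leq c_{\delta,p}^{O(|S|)}\,\Pr_{p+\delta}(S\text{ occurs},\ \vx\subset C(x_1)).
\]
Summing this bound over $S\in MS^N(\vx)$ with $|S|\geq c\, diam(\vx)/N$ and invoking \Lr{exp dec x} at parameter $p+\delta$, with $\delta$ chosen so that the resulting geometric ratio $c_{\delta,p}\cdot e^{-t}$ stays below $1$, yields the claimed bound $c_1 e^{-c_2\, diam(\vx)}$. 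The hard part throughout is the mentioned extraneous edge-dependence of the connectivity event; the resolution is simply that the enclosed region is still finite (its edge-count is $O(N^d|S|)$), so polynomiality is preserved and the exponential decay from \Lr{exp dec x} remains strong enough to close the argument.
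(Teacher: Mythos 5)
Your proposal contains a genuine gap in the treatment of $\tau^f_{\vx}$, precisely at the point you flag and then wave away. You write that the event $\{S\text{ occurs},\ \vx\subset C(x_1)\}$ is measurable with respect to the edges in $S\cup\partial_\sboxt S$ \emph{together with} the enclosed finite components, and you claim this gives complexity $O(N^d|S|)$, asserting ``the enclosed region is still finite (its edge-count is $O(N^d|S|)$).'' That last assertion is false: a separating component $S$ can be a thin spherical shell of boxes at $N\Ls$-distance $r$ from $B(o)$, in which case $|S|=\Theta(r^{d-1})$ while the region enclosed contains $\Theta(r^d)=\Theta(|S|^{d/(d-1)})$ boxes. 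Hence your summands do \emph{not} satisfy condition~(i) of \Tr{cor general} (complexity $\Theta(n)$ where $n=|S|$), and when you adapt \Lr{C equals S NN} the resulting exponent is $|\partial^E S'|$ for a cluster shape $S'$ living in that large enclosed region, which is not $O(|S|)$ either. Both the analyticity argument and the estimate $|\Pr_z(S\text{ occurs},\vx\subset C(x_1))|\leq c_{\delta,p}^{O(|S|)}\cdots$ therefore break down: the bound $c^{O(|S|)}$ cannot absorb the potentially much larger number of closed edges needed to carve out the cluster $C(x_1)$ inside $S$.

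This is exactly the obstacle the paper goes out of its way to circumvent. The paper's proof does not expand $\{\vx\text{ is connected}\}$ directly; instead it shows that on the event $\{A,\ \mathcal{S}_{\vx}=S\}$, connectivity of $\vx$ is equivalent to the conjunction of a tailor-made event $\mathcal{C}$ (connectivity through $S\cup\partial_\sboxt S$ only, hence measurable with respect to $O(N^d|S|)$ edges) and the event $\{\vx\rightarrow S\}$ (no further separating component separates an $x_i\in\vx_{out}$ from $S$). The latter event is then expanded by a \emph{second} inclusion-exclusion over collections $T$ of separating components, yielding summands $\Pr_p(A,T\text{ occurs},\mathcal{C},S\text{ occurs})$ that depend on only $O(N^d(|S|+|T|))$ edges, so the pair $(S,T)$ acts as an element of $MS^N(\vx)$ and \Lr{exp dec x} together with \Tr{cor general} applies. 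Without this re-encoding you are stuck with events whose complexity is super-linear in the parameter indexing your sum, and the machinery from Section~2 does not apply. Your inclusion-exclusion on subsets $A\subseteq\vx$ for $\tau_{\vx}-\tau^f_{\vx}$, by contrast, is fine (there is no connectivity requirement inside the enclosed region for the pure finiteness events, and the paper reaches the same conclusion by a slightly different decomposition), and your observation that the polynomial term vanishes when $diam(\vx)\geq N/5$ is correct; but the core estimate for $\tau^f_{\vx}$ needs the paper's two-layer inclusion-exclusion, which your proposal does not supply.
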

\begin{proof}
We start by showing that $\tau^f_{\vx}(p)$ is analytic. Suppose $\vx=\{x_1,\ldots,x_k\}$, and let $A$ be the event that $diam(C_{x_i})\geq N/5$ for every $i\leq k$. We will write $\{\vx \text{ is connected}\}$ to denote the event that all vertices of $\vx$ belong to the same cluster, which we denote $C_{\vx}$. When $C_{\vx}$ is finite and both events $\{\vx \text{ is connected}\}$ and $A$ occur, we will write $\mathcal{S}_{\vx}$ for the separating component of the latter cluster, namely the $N\Ls$-component of $\partial C_{\vx}(N)$. The event $\{\text{S occurs}\}$ is defined as in the previous section except that now $C_o$ is replaced by $C_{\vx}$, i.e. the event $\{\vx \text{ is connected}\}$ occurs in a witness $\omega'$, and $S$ contains $\partial C_{\vx}(\omega')(N)$.  With the above definitions we have $$\tau^f_{\vx}(p)=\Pr_p(A^\mathsf{c}, \vx \text{ is connected})+\sum_S \Pr_p(A,\vx \text{ is connected}, \mathcal{S}_{\vx}=S),$$
where the sum ranges over all possible separating components separating all of $\vx$ from infinity. 

Our aim is to further decompose the events of the above expansion into simpler ones that we have better control of, and then use the inclusion-exclusion principle. We will first introduce some notation. Given a separating component $S$ as above, we first decompose $\vx$ into two sets $\vx_{out}$ and $\vx_{in}$, where $\vx_{out}$ denotes the set of those vertices of $\vx$ lying in some finite component of $N\Ls\setminus (S\cup \partial_\sboxt S)$, and $\vx_{in}:= \vx \backslash \vx_{out}$ its complement. We write $\{\vx \rightarrow S\}$ for the event that no separating component separating some $x_i\in \vx$ from $S$ occurs; to be more precise, the event $\{\vx \rightarrow S\}$ means  that for each $x_i\in \vx_{out}$, no separating component that surrounds $x_i$ and lies entirely in some of the finite components of $N\Ls\setminus (S\cup \partial_\sboxt S)$ occurs. 

Consider now some vertex $x$ in $\vx_{out}$, and let $F$ be the component of $\partial_\sboxt S$ that separates $x$ from $S$. We claim that when $S$ and the events $A$, $\{\vx \rightarrow S\}$ all occur, then $x$ is connected to the unique large cluster of $F$. In particular, if another vertex of $\vx$ lies in the same finite component of $N\Ls\setminus (S\cup \partial_\sboxt S)$ as $x$ does, then both vertices are connected to the unique large cluster of $F$, hence they are connected to each other. To see that the claim holds, notice that $C_{x}$ has to be finite, because $S\cup\partial_\sboxt S$ contains a minimal edge cut of closed edges that surrounds all vertices of $\vx$, hence $x$. Now $\partial C_{x}(N)$ has to intersect $S$, because it cannot lie entirely in $N\Ls\setminus (S\cup \partial_\sboxt S)$ by our assumption. This implies that $x$ is connected to some vertex inside $S$, hence it must first visit the unique large cluster of $F$, as desired.

We now define $\mathcal{C}$ to be the event that 
\begin{itemize}
\item all vertices of $\vx_{in}$ are connected to each other with open paths lying in $S\cup \partial_\sboxt S$,
\item the unique large percolation clusters of the components $F$ of $\partial_\sboxt S$ that separate some $x_i\in \vx_{out}$ from $S$ are connected to each other with open paths lying $S\cup \partial_\sboxt S$,
\item all vertices of $\vx_{in}$ are connected to all such percolation clusters with open paths lying in $S\cup \partial_\sboxt S$.
\end{itemize}
(It is possible that either $\vx_{in}$ or $\vx_{out}$ is the empty set, in which case the third item and one of the first two are empty statements.)
We claim that when $S$ and the events $A$, $\{\vx \rightarrow S\}$ and $\{\vx \text{ is connected}\}$ all occur, then the event $\mathcal{C}$ occurs as well. Indeed, consider a vertex $x\in\vx_{out}$, and let $F$ be the component of $\partial_\sboxt S$ that separates $x$ from $S$, as above. Any open path connecting $x$ to some vertex of $\vx_{out}$ which does not lie in the same finite component of $N\mathbb{L}^d_\sboxt$ that $x$ does, has to first visit the unique large percolation cluster of $F$. Hence it suffices to prove that when two vertices $x_i$ and $x_j$ of $\vx_{in}$ lie in the same cluster, there is always an open path connecting them lying entirely in $S\cup \partial_\sboxt S$.
To this end, assume that there is a path $P$ in $\omega$ connecting $x_i$ to $x_j$, which does not lie entirely in $S\cup \partial_\sboxt S$. Arguing as in the proof of \Lr{finite}, we can modify $P$ to obtain an open path $P'$ connecting $x_i$ to $x_j$ which lies entirely in $S\cup \partial_\sboxt S$. The desired claim follows now easily.

Combining the above claims, we conclude that the events $\{A,\vx \text{ is connected},$ $\mathcal{S}_{\vx}=S\}$ and $\{A,\mathcal{C},\vx\rightarrow S,S \text{ occurs}\}$ coincide, and thus $$\Pr_p(A,\vx \text{ is connected}, \mathcal{S}_{\vx}=S)=\Pr_p(A,\mathcal{C},\vx\rightarrow S,S \text{ occurs}).$$
Using the inclusion-exclusion principle we obtain that 
\begin{align}
\begin{split}
\Pr_p(A,\mathcal{C},\vx\rightarrow S,S \text{ occurs})=\Pr_p(A,\mathcal{C},S \text{ occurs})+ \\ \sum_{T}(-1)^{c(T)}\Pr_p(A,T \text{ occurs},\mathcal{C},S \text{ occurs}),
\end{split}
\end{align}
where the latter sum ranges over all finite collections $T$ of separating components separating $\vx$ from $S$. Collecting now all the terms 
we obtain that 
\begin{align}
\begin{split}
\tau^f_{\vx}(p)=\Pr_p(A^\mathsf{c},\vx \text{ is connected})+\\ \sum_S\Big(\Pr_p(A,\mathcal{C},S \text{ occurs})+\sum_{T}(-1)^{c(T)}\Pr_p(A,T \text{ occurs},\mathcal{C},S \text{ occurs})\Big).
\end{split}
\end{align}

Notice that by combining $S$ and $T$ we obtain an element of $MS^N(\vx)$, hence we can use \Lr{exp dec x}, and then argue as in the proof of \Tr{analytic} to obtain that $\tau^f_{\vx}$ is analytic above $p_c$.

\bigskip
We will now prove the analyticity of $\tau_{\vx}$.  Since $\tau^f_{\vx}$ is analytic, it suffices to prove that $\tau_{\vx}-\tau^f_{\vx}$ is analytic.
It is well-known that the infinite cluster is unique in our setup \cite{BKunique}, and this implies that 
$\tau_{\vx}-\tau^f_{\vx} = \Pr(|C_{x_1}|=\infty,\ldots,|C_{x_k}|=\infty)$. The  latter probability is complementary to $\Pr(\cup_{i=1}^k \{|C_{x_i}|<\infty\})$, which is in turn equal to
$$\Pr(\cup_{i=1}^k \{|C_{x_i}|<\infty\})=\Pr(A^\mathsf{c})+\Pr(\big(\cup_{i=1}^k \{|C_{x_i}|<\infty\}\big)\cap A).$$
Define the event $\{\text{S occurs for some } x_i\in \vx\}$ as in the previous section expect that now we require the existence of a witness $\omega'$ such that $S$ contains $\partial C_{x_i}(\omega')(N)$ for some $x_i\in \vx$.
We can expand the latter term as an infinite sum using the inclusion-exclusion principle to obtain
$$\Pr(\big(\cup_{i=1}^k \{|C_{x_i}|<\infty\}\big)\cap A)=\sum (-1)^{c(S)+1}\Pr(\text{S occurs for some } x_i\in \vx, A),$$
where now we require our separating components to surround some $x_i\in \vx$.
Arguing as in the proof of \Tr{analytic} we obtain that $\tau_{\vx}-\tau^f_{\vx}$ is analytic, as desired.

\bigskip
For the second claim of the theorem, notice that when $diam(\vx)\geq N/5$, the probability $\Pr(A^\mathsf{c}, \vx \text{ is connected})$ is equal to $0$. Hence our expansion for $\tau^f_{\vx}$ simplifies to 
$$\tau^f_{\vx}(p)=\sum_S\Big(\Pr(A,\mathcal{C},S \text{ occurs})+\sum_{T}(-1)^{c(T)}\Pr(A,T \text{ occurs},\mathcal{C},S \text{ occurs})\Big).$$
Our goal is to show that for every $p>p_c$ there are some constants $\delta,t>0$  such that 
\begin{align}\label{abs}
\Bigl\lvert\sum_{|S|=n}\Big(\Pr_p(A,\mathcal{C},S \text{ occurs})+\sum_{T}(-1)^{c(T)}\Pr_p(A,T \text{ occurs},\mathcal{C},S \text{ occurs})\Big)\Bigr\rvert \leq e^{-tn}
\end{align}
for every $z\in D(p,\delta)$ for the analytic extensions of the above probabilities. Then the desired claim will follow easily from the observation that any plausible separating component $S$ of $\vx$ must have size $\Omega(diam(\vx))$. 

Notice that the event $A$ depends only on the edges in the boxes $B(x_i)$, $x_i\in \vx$. Moreover, the events $\mathcal{C}$ and 
$\{S \text{ occurs}\}$ depend on $O(|S|)$ edges, while the event $\{T \text{ occurs}\}$ depends on $O(|T|)$ edges. We can now use \Lr{C equals S NN} to conclude that there is a constant $c=c(p,\delta,N)>1$ (perhaps slightly larger than that of \Lr{C equals S NN}) such that
$$|\Pr_z(A,\mathcal{C},S \text{ occurs})|\leq c^{|S|} \Pr_{p'}(A,\mathcal{C},S \text{ occurs})$$
and $$|\Pr_z(A,T \text{ occurs},\mathcal{C},S \text{ occurs})|\leq c^{|S|+|T|} \Pr_{p'}(A,T \text{ occurs},\mathcal{C},S \text{ occurs})$$
for every $z\in D(p,\delta)$, where $p'=p+\delta$ if $p<1$, and $p'=1-\delta$ if $p=1$. Moreover, we can always choose $c$ in such a way that $c\rightarrow 1$ as $\delta\rightarrow 0$. Hence we have 
\begin{align}
\begin{split}
\Bigl\lvert\sum_{|S|=n}\Big(\Pr_z(A,\mathcal{C},S \text{ occurs})+\sum_{T}(-1)^{c(T)}\Pr_z(A,T \text{ occurs},\mathcal{C},S \text{ occurs})\Big)\Bigr\rvert \leq \\
c^n\sum_{|S|=n}\Big(\Pr_{p'}(A,\mathcal{C},S \text{ occurs})+\sum_{T}c^{|T|}\Pr_{p'}(A,T \text{ occurs},\mathcal{C},S \text{ occurs})\Big)
\end{split}
\end{align}
by the triangle inequality. It follows from \Lr{exp dec x} that the sum 
$$\sum_{|S|=n}\Big(\Pr_{p'}(A,\mathcal{C},S \text{ occurs})+\sum_{T}\Pr_{p'}(A,T \text{ occurs},\mathcal{C},S \text{ occurs})\Big)$$ decays exponentially in $n$, and by choosing $\delta$ small enough we can ensure that 
$$c^n\sum_{|S|=n}\Big(\Pr_{p'}(A,\mathcal{C},S \text{ occurs})+\sum_{T}c^{|T|}\Pr_{p'}(A,T \text{ occurs},\mathcal{C},S \text{ occurs})\Big)$$ decays exponentially in $n$ as well, hence
\eqref{abs} holds.
The proof is now complete.
\end{proof}

Using \Tr{tau} we can now prove the following results.

\begin{theorem}\label{expectation}
For every $k\geq 1$ and every $d\geq 3$, the functions $\chi_k^f(p):=\mathbb{E}_p(|C(o)|^k; |C(o)|<\infty)$ are analytic in $p$ on the interval $(p_c,1]$. 
\end{theorem}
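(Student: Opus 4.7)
The plan is to express $\chi_k^f(p)$ as a locally uniformly convergent infinite sum of truncated $k$-point functions, each of which is analytic by \Tr{tau}. Interchanging summation and expectation (legitimate as all summands are nonnegative) in the identity $|C_o|^k=\sum_{(x_1,\ldots,x_k)\in(\Z^d)^k} \prod_{i=1}^k \mathbbm{1}_{\{x_i\in C_o\}}$ yields
\[
\chi_k^f(p)=\sum_{(x_1,\ldots,x_k)\in(\Z^d)^k}\tau^f_{\{o,x_1,\ldots,x_k\}}(p).
\]
When all $x_i=o$ the summand equals $1-\theta(p)$, which is analytic on $(p_c,1]$ by \Tr{analytic}; every other summand is analytic on $(p_c,1]$ by \Tr{tau}. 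It therefore suffices to show that this series converges uniformly on a complex neighbourhood $D(p,\delta)$ of each $p\in(p_c,1]$, as such a limit of analytic functions is automatically analytic.

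Fix $p\in(p_c,1]$ and let $N=N(p)$ be as in \Tr{tau}. Writing $\vx:=\{o,x_1,\ldots,x_k\}$, I would split the sum according to whether $\mathrm{diam}(\vx)<N/5$. The first partial sum contains at most $(2N/5+1)^{kd}$ terms (since $\mathrm{diam}(\vx)<N/5$ forces $\lVert x_i\rVert_\infty<N/5$ for every $i$), and is thus a finite sum of analytic functions. For the tail, the ``Moreover'' clause of \Tr{tau} provides a disk $D(p,\delta)$ and positive constants $c_1,c_2$ such that $|\tau^f_{\vx}(z)|\leq c_1 e^{-c_2\,\mathrm{diam}(\vx)}$ for every $z\in D(p,\delta)$. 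Since $o\in\vx$ we have $\mathrm{diam}(\vx)\geq\max_i\lVert x_i\rVert_\infty$, and the number of $k$-tuples with $\max_i\lVert x_i\rVert_\infty\leq M$ is $(2M+1)^{kd}$. Grouping by this quantity,
\[
\sum_{\mathrm{diam}(\vx)\geq N/5}|\tau^f_{\vx}(z)|\leq c_1\sum_{M\geq 1}(2M+1)^{kd}e^{-c_2 M}<\infty,
\]
uniformly in $z\in D(p,\delta)$. The Weierstrass M-test then gives uniform convergence on $D(p,\delta)$, whence $\chi_k^f$ is analytic at $p$.

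The only step requiring care is the trade-off between the exponential decay of $|\tau^f_{\vx}|$ and the polynomial explosion in the number of $k$-tuples of a given spatial extent; fortunately \Tr{tau} delivers decay in the geometric diameter rather than in some cruder quantity (such as $|\vx|$), and this is precisely what absorbs the polynomial count. No further renormalisation or inclusion-exclusion is needed beyond what has already been built into the proof of \Tr{tau}; the step from $\tau^f$ to $\chi_k^f$ reduces to a clean counting argument.
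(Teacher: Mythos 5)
Your proof is correct and follows the same approach as the paper: expand $\chi_k^f$ as a sum of truncated multi-point functions $\tau^f_{\{o,x_1,\ldots,x_k\}}$ over $k$-tuples, invoke \Tr{tau} for termwise analyticity, and use the ``Moreover'' decay estimate of \Tr{tau} together with the polynomial growth of $\Z^d$ to obtain locally uniform convergence on a complex neighbourhood. The paper writes out the argument only for $k=1$ and declares $k\geq 2$ ``similar''; you have simply made the general case and the counting argument explicit.
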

\begin{proof}
Let us show that $\chi^f(p):=\mathbb{E}(|C(o)|; |C(o)|<\infty)$ is analytic. The case $k\geq 2$ will follow similarly.
We observe that, by the definitions,
$$\chi^f(p)=\sum_{x\in \Z^d} \tau^f_{\{o,x\}}=1+\sum_{x\in \Z^d\setminus \{o\}} 
\tau^f_{\{o,x\}}.$$ The probabilities $\tau^f_{\{o,x\}}$ admit analytic extensions by \Tr{tau}, and so it suffices to prove that the 
sum $\sum_{x\in \Z^d\setminus \{o\}} \tau^f_{\{o,x\}}$ converges uniformly on an open neighbourhood of $(p_c,1]$. This follows easily from the estimates of the second sentence of \Tr{tau}, and the polynomial growth of $\Z^d$.
\end{proof}

\begin{theorem}\label{free energy}
For every $d\geq 3$, the free energy $\kappa=\mathbb{E}(|C_o|^{-1})$ is analytic in $p$ on the interval $(p_c,1]$. 
\end{theorem}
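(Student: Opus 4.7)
The plan is to adapt the approach used for $\chi^f$ in \Tr{expectation}, via a vertex-decomposition of $\kappa$ in terms of functions that individually resemble the truncated two-point function. Starting from the pointwise identity $\mathbbm{1}_{|C_o|<\infty}=\sum_{v\in C_o}|C_o|^{-1}\mathbbm{1}_{|C_o|<\infty}$ and taking expectations yields
\[
1-\theta(p) \;=\; \sum_{v\in\Z^d} h_v(p), \qquad h_v(p):=\mathbb{E}_p\!\left(\frac{\mathbbm{1}_{v\in C_o,\, |C_o|<\infty}}{|C_o|}\right),
\]
so that $\kappa(p)=h_o(p)=(1-\theta(p))-\sum_{v\neq o}h_v(p)$. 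Since $1-\theta$ is analytic on $(p_c,1]$ by \Tr{analytic}, it suffices to prove that $\sum_{v\neq o}h_v$ extends analytically to a complex neighbourhood of $(p_c,1]$.

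For each $v\neq o$, the plan is to rerun the proof of \Tr{tau} almost verbatim, carrying the extra multiplicative factor $\mathbbm{1}_{v\in C_o}/|C_o|\in[0,1]$ throughout the expansion. Setting $A$ to be the event that $\mathrm{diam}(C_{x_i})\geq N/5$ for both $x_i=o,v$, I decompose
\[
h_v(p) \;=\; \mathbb{E}_p(\mathbbm{1}_{v\in C_o}/|C_o|;\,A^{\mathsf c}) \;+\; \sum_{S\in MS^N(\{o,v\})}(-1)^{c(S)+1}\,\mathbb{E}_p(\mathbbm{1}_{v\in C_o}/|C_o|;\,A,\,\text{coll.\ }S\text{ occurs}),
\]
using separating components surrounding $\{o,v\}$ together with inclusion–exclusion, exactly as in \Tr{tau}. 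Each resulting summand is a polynomial in $p$; \Lr{C equals S NN} bounds its complex extension, and the exponential decay in $|S|$ from \Lr{exp dec x} will let \Tr{cor general} yield analyticity of $h_v$ on a common complex disk around any $p_0\in(p_c,1]$.

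Uniform convergence of $\sum_{v\neq o}h_v(z)$ on such a disk then follows from the bound $|h_v(z)|\leq c_1 e^{-c_2|v|}$, obtained by running the same estimate as the second statement of \Tr{tau} with the extra factor $\mathbbm{1}_{v\in C_o}/|C_o|\leq \mathbbm{1}_{v\in C_o}$ carried through. Combined with the polynomial growth of $\Z^d$, this yields the analyticity of $\kappa$ on a complex neighbourhood of $(p_c,1]$.

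Main obstacle: the factor $1/|C_o|$ is a non-local functional of the configuration—on the event ``coll.\ $S$ occurs'' it depends on up to $O(|S|^{d/(d-1)})$ edges in the interior of $\partial^b\mathcal{S}_o$, which is super-linear in $|S|$. A naive application of \Lr{C equals S NN} to the joint event would produce a factor $c^{O(|S|^{d/(d-1)})}$, defeating the exponential decay from \Lr{exp dec x}. The key technical point is to show that the integrand $1/|C_o|$, being uniformly bounded by $1$, allows one to restrict the Lemma-2.3 blow-up to the $O(|S|)$ edges supporting the indicator of ``coll.\ $S$ occurs'', so that the effective complexity of each summand remains linear in $|S|$; this reduction---closing the gap between the support of the integrand and the support of the indicator---is the main step to verify in the full proof.
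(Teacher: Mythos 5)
Your approach diverges fundamentally from the paper's, and the obstacle you flag at the end is a genuine, unresolved gap rather than a deferred technical point. The problem is exactly as you describe it, but the fix you sketch does not exist in any obvious form. The quantity $\mathbb{E}_p(\mathbbm{1}_{v\in C_o}/|C_o|;\,A,\,S\text{ occurs})$ is a polynomial in $p$ depending on \emph{all} edges in the region enclosed by $S\cup\partial_\sboxt S$ --- of order $|S|^{d/(d-1)}$ many --- and \Lr{C equals S NN} controls complex extensions of such polynomials only through a factor exponential in the number of relevant edges (more precisely in $|\partial^E\!\cdot|$, but for a generic subcluster inside the enclosed region this boundary is not controlled by $|S|$ either; it is exactly the quantity that has only stretched-exponential tail on $(p_c,1-p_c]$). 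The fact that $1/|C_o|\le 1$ gives a bound on the \emph{real-valued} polynomial, but when $p$ is replaced by a complex $z$ the individual monomials $z^{|\omega|}(1-z)^{|\bar\omega|}$ acquire arbitrary phases, and boundedness of the coefficients does not prevent the modulus of the sum from growing like $c^{\#\text{edges}}$. Nothing in the machinery of \Lr{C equals S NN} or \Tr{cor general} lets you ``restrict the blow-up to the $O(|S|)$ edges supporting the indicator'' when the integrand genuinely sees the whole interior; the paper's proof of \Tr{tau} deliberately avoids this by decomposing into \emph{events} ($\mathcal C$, $A$, $\{S\text{ occurs}\}$, $\{T\text{ occurs}\}$) that depend only on edges inside $S\cup\partial_\sboxt S$, not on the full cluster. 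So as written, the proposal does not yield a proof.

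The paper sidesteps the non-locality of $1/|C_o|$ entirely: it invokes the known identity (from \cite{Uniqueness}) that $\kappa$ is differentiable on $(p_c,1)$ with
\[
\kappa'(p)=\frac{1}{2(1-p)}\sum_{x\sim o}\bigl(1-\tau_{\{o,x\}}(p)\bigr),
\]
a finite sum of two-point functions each of which is analytic on $(p_c,1]$ by \Tr{tau} (with $\tau_{\{o,x\}}(1)=1$ killing the singularity at $p=1$). Integrating and checking left-continuity of $\kappa$ at $1$ via $\kappa(p)\le 1-\theta(p)\to 0$ completes the argument. If you want to keep a direct expansion approach, you should first convert $1/|C_o|$ into a local quantity (as the Russo-type/differentiation identity does), rather than carrying the global factor through the inclusion-exclusion.
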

\begin{proof}
It is  known \cite{Uniqueness} that $\kappa$ is differentiable on $(p_c,1)$ with derivative equal to $$f(p):=\dfrac{1}{2(1-p)}\sum_{x\in N(o)} \big(1-\tau_{\{o,x\}}(p)\big).$$ Since each $\tau_{\{o,x\}}$ is analytic on the interval $(p_c,1]$, and $\tau_{\{o,x\}}(1)=1$, $f$ is analytic on $(p_c,1]$ as well. So far we know that $\kappa$ coincides with a primitive $F$ of $f$ only on  $(p_c,1)$, which implies that $\kappa$ is analytic on that interval. In fact, $\kappa$ coincides with $F$ on the whole interval $(p_c,1]$. Indeed, we simply need to verify that $\kappa$ is continuous from the left at $1$. To see this notice that $\kappa(1)=1-\theta(1)=0$ and $\kappa(p)\leq 1-\theta(p)$. Since $\theta$ is continuous from the left at $1$, which follows e.g. by \Tr{analytic}, we have that $\kappa$ is continuous from the left at $1$ as well, hence coincides with $F$ on the whole interval $(p_c,1]$. It now follows that $\kappa$ is analytic in $p$ on the interval $(p_c,1]$, as desired. 
\end{proof}

\acknowledgement{We thank Tom Hutchcroft for suggesting using the results of \cite{KeZhaPro}.}

\bibliographystyle{plain}
\bibliography{collective}
\end{document}